\newtheorem{theorem}{Theorem}[section]
\newtheorem{lemma}[theorem]{Lemma}
\newtheorem{definition}[theorem]{Definition}
\newtheorem{proposition}[theorem]{Proposition}
\newtheorem{cor}[theorem]{Corollary}
\newtheorem{remark}[theorem]{Remark}
\def\<{\langle}
\def\>{\rangle}
\def\c{\cdot}
\def\d{\delta}
\def\o{\otimes}
\date{}
\begin{document}
\renewcommand{\baselinestretch}{1.2}
\renewcommand{\arraystretch}{1.0}
\title{\bf On 3-Lie algebras with  a derivation }
\date{}
\author{{\bf Shuangjian Guo$^{1}$, Ripan Saha$^{2}$\footnote
        { Corresponding author:~~ripanjumaths@gmail.com} }\\
{\small 1. School of Mathematics and Statistics, Guizhou University of Finance and Economics} \\
{\small  Guiyang  550025, P. R. of China} \\
{\small 2. Department of Mathematics, Raiganj University } \\
{\small  Raiganj, 733134, West Bengal, India}}
 \maketitle
\begin{center}
\begin{minipage}{13.cm}

{\bf \begin{center} ABSTRACT \end{center}}
In this paper, we study 3-Lie algebras with derivations. We call the pair consisting of a 3-Lie  algebra and a distinguished derivation by the 3-LieDer pair. We define a cohomology theory for 3-LieDer pair with coefficients in a representation. We study central extensions of a 3-LieDer pair and show that central extensions are classified by the second cohomology of the 3-LieDer pair with coefficients in the trivial representation. We generalize Gerstenhaber's  formal deformation theory
to 3-LieDer pairs in which we deform both the 3-Lie  bracket and the distinguished derivation.
 \smallskip

{\bf Key words}: 3-Lie algebra,  derivation,  representation, cohomology, central extension,      deformation.
 \smallskip

 {\bf 2020 MSC:} 17A42,  17B10, 17B40,  17B56
 \end{minipage}
 \end{center}
 \normalsize\vskip0.5cm

\section{Introduction}
\def\theequation{\arabic{section}. \arabic{equation}}
\setcounter{equation} {0}
3-Lie algebras are special types of $n$-Lie algebras and have close relationships with many important fields in mathematics
and mathematical physics \cite{BL08, BL8}. The structure of 3-Lie algebras is closely linked to the supersymmetry and gauge
symmetry transformations of the world-volume theory of multiple coincident $M2$-branes and is applied to the study of
the Bagger-Lambert theory. Moreover, the $n$-Jacobi identity can be regarded as a generalized Plucker relation in the physics
literature.  In particular, the metric 3-Lie algebras, or more generally, the 3-Lie algebras with invariant symmetric bilinear forms attract even more attention in physics. Recently, many more properties and structures of 3-Lie algebras have been developed, see  \cite{BG16, BW12,  DB18, L16, ST18, X19, Z15}  and references cited therein.

Derivations of types of algebra provide many important aspects of the algebraic structure. For example, Coll, Gertstenhaber, and Giaquinto \cite{C89} described explicitly a deformation formula for algebras whose Lie algebra of derivations contains the unique non-abelian Lie algebra of dimension two. Amitsur \cite{AS57, AS82} studied derivations of central simple algebras. Derivations are also used to construct homotopy Lie algebras \cite{V05} and play an important role in the study of differential Galois theory \cite{M94}. One may also look at some interesting roles played by derivations in control theory and gauge theory in quantum field theory \cite{A12}. In \cite{DL16}, the authors studied algebras with derivations from an operadic point of view. Recently, Lie algebras with derivations (called LieDer pairs) are studied from a cohomological point of view \cite{T19} and extensions, deformations of LieDer pairs are considered. The results of \cite{T19} have been extended to associative algebras and Leibniz algebras with derivations in \cite{D20} and \cite{DL20}.

The deformation is a tool to study a mathematical object by deforming it into a family of the same kind of objects depending on a certain parameter. The deformation theory was introduced by Gerstenhaber for rings and algebras \cite{G63, G64}, and by Zhang for 3-Lie color algebras \cite{Z15}. They studied 1-parameter formal deformations and established the connection between the cohomology groups and infinitesimal deformations. Motivated by Tang's \cite{T19} terminology of LieDer pairs.  Due to the importance of  3-Lie algebras, cohomology, and deformation theories, Our main objective of this paper is to study the cohomology and deformation theory of 3-Lie algebra with a derivation.

The paper is organized as follows. In Section 2, we define a cohomology
theory for 3-LieDer pair with coefficients in a representation. In Section 3,  we study central extensions of a 3-LieDer pair and show that isomorphic classes of central extensions are classified by the
second cohomology of the 3-LieDer pair with coefficients in the trivial representation. In Section 4,  we study formal one-parameter deformations of 3-LieDer pairs in which we deform
both the 3-Lie  bracket and the distinguished derivations.

Throughout this paper, we work over the field $\mathbb{F}$ of characteristics $0$.

  \section{  Cohomology of 3-LieDer pairs}
\def\theequation{\arabic{section}. \arabic{equation}}
\setcounter{equation} {0}
 In this section, we define a cohomology theory for 3-LieDer pair with coefficients in a representation.

\begin{definition} (\cite{F85})
A 3-Lie algebra is a tuple $(L, [\c, \c, \c])$ consisting of a vector space $L$, a 3-ary skew-symmetric
operation $[\c,\c,\c]: \wedge^{3}L\rightarrow L$  satisfying the following Jacobi identity
\begin{eqnarray}\label{3-lie identity}
[x, y, [u,v,w]]&=&[[x,y,u], v, w] +[u, [x,y,v], w]+[u, v,[x,y,w]],
\end{eqnarray}
for any $x, y, u, v,w \in L$.
\end{definition}
\begin{definition} (\cite{K87})
A representation of a  3-Lie algebra $(L, [\c, \c, \c])$ on the vector space $M$  is a linear map $\rho: L \wedge L \rightarrow \mathfrak{gl}(M)$, such that for any $x, y, z, u\in L$, the following equalities are satisfied
\begin{eqnarray*}
&& \rho([x,y,z], u)=\rho(y, z)\rho(x,u)+\rho(z, x)\rho(y,u)+\rho(x, y)\rho(z,u),\\
&& \rho(x, y)\rho(z,u)=\rho(z, u)\rho(x,y)+\rho([x,y,z], u)+\rho(z, [x,y,u]).
\end{eqnarray*}
Then $(M, \rho)$ is called a representation of $L$, or $M$ is an $L$-module.
\end{definition}

\begin{definition}(\cite{F85})
 Let $(L, [\c, \c, \c])$ be a  3-Lie algebra. A derivation on $L$ is
given by a linear map $\phi_L: L \rightarrow L$ satisfying
\begin{eqnarray*}
\phi_L([x, y, z])=[\phi_L(x), y, z]+[x, \phi_L(y), z]+[x, y,  \phi_L(z)],~~~~~\forall x, y, z \in L.
\end{eqnarray*}
\end{definition}
We  call the pair $(L, \phi_L)$ of a 3-Lie algebra and a derivation by a 3-LieDer pair.

\begin{remark}
Let $(L, [\c, \c, \c])$ be a $3$-Lie algebra. For all $x_1, x_2 \in L$, the map defined by 
$$ad_{x_1, x_2} x := [x_1, x_2, x],~\text{for all}~x\in L,$$
is called the adjoint map. From the Equation \ref{3-lie identity}, it is clear that $ad_{x_1, x_2}$ is a derivation. The linear map $ad : L\wedge L \to \mathfrak{gl}(L)$ defines a representation  of $(L, [\c, \c, \c])$ on itself. This representation is called the adjoint representation.
\end{remark}

\begin{definition}
 Let $(L, \phi_L)$ be a 3-LieDer pair.  A representation of $(L, \phi_L)$ is given by $(M, \phi_M)$ in which $M$ is a  representation of $L$ and $\phi_M: M\rightarrow M$ is a linear map satisfying
 \begin{eqnarray*}
&& \phi_M(\rho(x,y)(m))=\rho(\phi_L(x),y)(m)+\rho(x,\phi_L(y))(m)+\rho(x,y)(\phi_M(m)),
 \end{eqnarray*}
for all $x, y\in L$ and $m\in M$.
\end{definition}
\begin{proposition}
 Let $(L, \phi_L)$ be a 3-LieDer pair  and  $(M, \phi_M)$ be a representation of it. Then $(L\oplus M,  \phi_L\oplus\phi_{M})$ is a 3-LieDer pair where the 3-Lie algebra bracket on $L\oplus M$  is given by the semi-direct product
\begin{eqnarray*}
[(x, m), (y, n), (z, p)]=([x, y, z], \rho(y,z)(m)+\rho(z,x)(n) + \rho(x,y)(p)),
\end{eqnarray*}
for any $x, y, z\in L$ and $m, n, p\in M$.
\end{proposition}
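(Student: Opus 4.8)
The proof breaks into two essentially independent verifications: first that the semidirect bracket makes $L\oplus M$ into a 3-Lie algebra, and then that $\phi_L\oplus\phi_M$ is a derivation of that bracket. I would organize the whole computation by projecting every identity onto its $L$-component and its $M$-component and treating the two separately, since the bracket is ``upper triangular'' (its $L$-component depends only on the $L$-entries).

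For the 3-Lie algebra structure I would first confirm that the bracket descends to $\wedge^3(L\oplus M)$, i.e. that it is skew-symmetric. The $L$-component $[x,y,z]$ is skew by hypothesis, and for the $M$-component I would use that $\rho$ is defined on $L\wedge L$ and is hence skew in its two slots, $\rho(y,z)=-\rho(z,y)$; transposing any two of the three entries then permutes the three summands $\rho(y,z)m,\ \rho(z,x)n,\ \rho(x,y)p$ while producing an overall sign, which is exactly skew-symmetry. Next I would expand the Jacobi identity \eqref{3-lie identity} for the semidirect bracket and compare components. The $L$-component collapses precisely to the Jacobi identity of $[\c,\c,\c]$ on $L$. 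The $M$-component is the substantial step: after collecting the coefficient of each module element that occurs, the surviving relations are exactly the two defining axioms of a representation of a 3-Lie algebra. The first axiom, $\rho([x,y,z],u)=\rho(y,z)\rho(x,u)+\rho(z,x)\rho(y,u)+\rho(x,y)\rho(z,u)$, matches the terms in which a module element is acted on only after an inner bracket in $L$ has been formed, while the second axiom, $\rho(x,y)\rho(z,u)=\rho(z,u)\rho(x,y)+\rho([x,y,z],u)+\rho(z,[x,y,u])$, matches the commutator-type terms coming from the two outer actions. I expect this bookkeeping---deciding which axiom cancels which group of terms---to be the main obstacle, although it is routine once the terms are sorted by the module element on which they act.

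For the derivation property I would apply $\phi_L\oplus\phi_M$ to a triple bracket and again compare components. On the $L$-component the needed identity $\phi_L[x,y,z]=[\phi_L(x),y,z]+[x,\phi_L(y),z]+[x,y,\phi_L(z)]$ is exactly the statement that $\phi_L$ is a derivation of $L$. On the $M$-component, expanding the three terms on the right-hand side and grouping by the module element $m$, $n$, or $p$ that is being acted on, each group reduces to a single instance of the compatibility condition $\phi_M(\rho(x,y)(m))=\rho(\phi_L(x),y)(m)+\rho(x,\phi_L(y))(m)+\rho(x,y)(\phi_M(m))$ in the definition of a representation of a 3-LieDer pair; for instance the coefficient of $m$ yields $\phi_M(\rho(y,z)(m))=\rho(\phi_L(y),z)(m)+\rho(y,\phi_L(z))(m)+\rho(y,z)(\phi_M(m))$, which is that condition applied to the pair $(y,z)$, and the $n$- and $p$-groups follow by the same condition under the cyclic relabelling built into the semidirect bracket. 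Assembling the two components shows that $\phi_L\oplus\phi_M$ is a derivation, and hence that $(L\oplus M,\ \phi_L\oplus\phi_M)$ is a 3-LieDer pair.
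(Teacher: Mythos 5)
Your proposal is correct and, on the substantive point, follows the same route as the paper: the derivation property of $\phi_L\oplus\phi_M$ is checked componentwise, with the $L$-component reducing to $\phi_L$ being a derivation and the $M$-component reducing to the compatibility condition $\phi_M(\rho(x,y)(m))=\rho(\phi_L(x),y)(m)+\rho(x,\phi_L(y))(m)+\rho(x,y)(\phi_M(m))$ applied to each of the three summands $\rho(y,z)(m)$, $\rho(z,x)(n)$, $\rho(x,y)(p)$. The only difference is that the paper dispenses with the 3-Lie algebra axioms on $L\oplus M$ in one sentence (``it is known that $L\oplus M$ equipped with the above product is a 3-Lie algebra,'' the classical semidirect product), whereas you verify skew-symmetry and the fundamental identity from Kasymov's two representation axioms; your bookkeeping there is sound and simply makes the argument self-contained.
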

{\bf Proof.}    It is known that $L\oplus M$ equipped with the above product is a 3-Lie algebra.
Moreover,  we have
\begin{eqnarray*}
&&(\phi_L\oplus\phi_{M})([(x, m), (y, n), (z, p)])\\
&=& (\phi_L([x, y, z]), \phi_{M}(\rho(y,z)(m))+\phi_{M}(\rho(z,x)(n))+\phi_M(\rho(x,y)(p)))\\
&=&  ([\phi_L(x), y, z], \rho(y,z)(\phi_M(m))+\rho(\phi_L(x),z)(n) + \rho(\phi_L(x),y)(p))\\
&&+ ([x, \phi_L(y), z], \rho(\phi_L(y),z)(m)+\rho(z,x)(\phi_M(n)) + \rho(x,\phi_L(y))(p))\\
&&+([x, y, \phi_T(z)], \rho(y,\phi_L(z))(m)+\rho(z,\phi_L(x))(n) + \rho(x,y)(\phi_{M}(p)))\\
&=&  [(\phi_L\oplus\phi_{M})(x, m), (y, n), (z, p)]+[(x, m), (\phi_L\oplus\phi_{M})(y, n), (z, p)]\\
&&+[(x, m), (y, n), (\phi_L\oplus\phi_{M})(z, p)].
\end{eqnarray*}
Hence the proof is finished.   \hfill $\square$

Recall from \cite{T95} that let $\rho$ be a representation of $(L,[\c, \c,\c])$ on $M$.  Denote by $C^n(L,M)$ the set of all $n$-cochains and defined as
$$C^n(L,M) = \text{Hom}((\wedge^2 L)^{\otimes n-1}, M),~n\geq 1.$$

Let $d^n: C^{n}(L, M) \rightarrow C^{n+1}(L, M)$ be defined by
\begin{align*}
     &d^n f(X_1,\ldots, X_n, x_{n+1})\\
    =&(-1)^{n+1}\rho(y_{n}, x_{n+1})f(X_1,\ldots, X_{n-1}, x_{n})\\
     &+(-1)^{n+1}\rho( x_{n+1}, x_{n})f(X_1,\ldots,X_{n-1},y_{n})\\
     &+\sum_{j=1}^n(-1)^{j+1}\rho(x_{j}, y_{j}) f(X_1,\ldots,\hat{X_{j}},\ldots, X_{n}, x_{n+1})\\
     &+\sum_{j=1}^n(-1)^{j} f(X_1,\ldots,\hat{X_{j}},\ldots, X_n, [x_{j}, y_{j}, x_{n+1}]),\\
     &+ \sum_{1\leq j<k\leq n}(-1)^{j}f(X_1,\ldots,\hat{X_{j}},\ldots, X_{k-1}, [x_{j}, y_{j}, x_{k}]\wedge y_k\\
    & +x_k\wedge [x_{j}, y_{j}, x_{k}],X_{k+1},\ldots,X_{n}, x_{n+1}),
\end{align*}
for all $X_i=x_i\wedge y_i\in \otimes^2L,i=1, 2, \ldots, n$ and $x_{n+1}\in L$, it was proved that  $d^{n+1}\circ d^n=0$. Therefore, $(C^\ast (L, M), d^\ast)$ is a cochain complex.

Observe that for trivial representation coboundary maps $d^1$ and $d^2$ are explicitly given as follows:
$$d^1(f)(a,b,c)= [f(a),b,c] + [a, f(b), c] + [a, b, f(c)]- f([a,b,c]),~f\in C^1(L, M).$$
$$d^2(f)(a,b,c,d,e) = [a,b, f(c,d,e)]- f([a,b,c],d,e) + f(a,b,[c,d,e])-[f(a,b,c),d,e],~f\in C^2(L, M).$$

  In \cite{R05}, the graded space $C^{\ast} (L, L) = \bigoplus_{n\geq 0}C^{n+1} (L, L)$ of cochain groups carries a degree -1 graded Lie bracket given by
$[f, g] = f \circ g- (-1)^{mn} g \circ  f$, for $f \in C^{m+1} (L, L), g \in C^{n+1} (L, L)$, where $f \circ g \in C^{m+n+1} (L, L)$, and defined as follows:
\begin{eqnarray*}
&&f\circ g(X_1,\ldots, X_{m+n}, x)\\
&=& \sum_{k=1}^{m}(-1)^{(k-1)n} \sum_{\sigma\in \mathbb{S}(k-1, n)}f(X_{\sigma(1)}, \ldots, X_{\sigma(k-1)}, g(X_{\sigma(k)}, \cdots, X_{\sigma(k+n-1)}, x_{k+n})\\
&&\wedge y_{k+n},X_{\sigma(k+n+1)}, \ldots, X_{\sigma(m+n)}, x) \\
&&+\sum_{k=1}^{m}(-1)^{(k-1)n} \sum_{\sigma\in \mathbb{S}(k-1, n)}(-1)^{\sigma} f(X_{\sigma(1)}, \ldots, X_{\sigma(k-1)},x_{k+n}\\
&&\wedge g(X_{\sigma(k)}, \ldots, X_{\sigma(k+n-1)},y_{k+n}), X_{k+n+1}, \ldots, X_{m+n}, x)\\
&&\sum_{\sigma \in \mathbb{S}(m, n)}(-1)^{mn}(-1)^{\sigma}f(X_{\sigma(1)}, \ldots, X_{\sigma(m)}, g(X_{\sigma(m+1)}, \ldots, X_{\sigma(m+n-1)},X_{\sigma(m+n)}, x)),
\end{eqnarray*}
for all $X_i=x_i\wedge y_i\in \otimes^2L,i=1, 2, \ldots, m+n$ and $x\in L$. Here $\mathbb{S}(k-1, n)$ denotes the set of all $(k-1, n)$-shuffles. Moreover, $\mu : \otimes^3L\rightarrow L$ is a 3-Lie bracket if and only if $[\mu, \mu] = 0$, i.e. $\mu$ is a
Maurer-Cartan element of the graded Lie algebra $(C^{\ast} (L, L), [\c, \c]$.   where $\mu$ is considered as an element in $C^2 (L, L)$. With this notation, the differential (with coefficients
in $L$) is given by
\begin{eqnarray*}
df=(-1)^{n}[\mu, f], ~~~~\text{for all}~ f\in C^{n} (L, L).
\end{eqnarray*}

In the next, we introduce cohomology for a 3-LieDer pair with coefficients in a representation.

 Let $(L,  \phi_L)$ be a 3-LieDer pair and  $(M, \phi_{M})$ be a representation of it.
  For any $n\geq 2$,  we define cochain groups for 3-LieDer pair as follows:
  $$C^n_{\text{3-LieDer}}(L, M) := C^n(L, M)\oplus C^{n-1}(L, M).$$
  Define the space
$C^0_{\text{3-LieDer}} (L,M)$ of $0$-cochains to be $0$ and the space $C^1_{\text{3-LieDer}} (L,M)$ of 1-cochains to be Hom$(L,M)$. 
  Note that $\mu = [\c,\c,\c] \in C^2(L, L)$ and derivation $\phi_L \in C^1(L, L) $. Thus, the pair $(\mu, \phi_L) \in C^2_{\text{3-LieDer}}(L, L)$.
 To define the coboundary map for $3$-LieDer pair, we need following map $\delta : C^n(L, M)\rightarrow C^n(L, M)$ by
 \begin{eqnarray*}
\delta f=\sum_{i=1}^n f\circ (Id_L\o \c \c \c\o \phi_L\o\c \c \c \o Id_L)-\phi_{M}\circ f.
 \end{eqnarray*}

The following lemma shows maps $\partial$ and $\delta$ commute, and is  useful to define the coboundary operator of the cohomology of
3-LieDer pair.
\begin{lemma}
The map $\delta$ commute with $d$, i.e, $d\circ \delta=\delta\circ d$.
\end{lemma}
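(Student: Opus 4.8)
The plan is to recognize that the seemingly ad hoc operators $d$ and $\delta$ are both (signed) inner derivations of the graded Lie algebra $(C^{\ast}(L,L),[\cdot,\cdot])$ recalled from \cite{R05}, and then deduce their commutation from the graded Jacobi identity together with the fact that a derivation is the same thing as an element that brackets trivially with $\mu$. I would first treat the self-coefficient case $M=L$, $\rho=\mathrm{ad}$, $\phi_M=\phi_L$. Here the differential is $df=(-1)^n[\mu,f]$ for $f\in C^n(L,L)$, and the key initial observation is that $\delta$ is likewise an adjoint operator: unwinding the composition $\circ$ for the single-slot cochain $\phi_L\in C^1(L,L)$ gives $\phi_L\circ f=\phi_L\circ f$ (post-composition, matching $\phi_M\circ f$) and $f\circ\phi_L=\sum_i f(\cdots\phi_L\cdots)$ (insertion of $\phi_L$ into each linear argument, matching the first sum in the definition of $\delta$). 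Hence $\delta f=f\circ\phi_L-\phi_L\circ f=-[\phi_L,f]$. So $d=\pm\mathrm{ad}_\mu$ and $\delta=-\mathrm{ad}_{\phi_L}$.

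Next I would encode the derivation property of $\phi_L$ as the single equation $[\mu,\phi_L]=0$: indeed $[\phi_L,\mu]=\phi_L\circ\mu-\mu\circ\phi_L$, where $\phi_L\circ\mu=\phi_L([x,y,z])$ while $\mu\circ\phi_L=[\phi_L(x),y,z]+[x,\phi_L(y),z]+[x,y,\phi_L(z)]$, and Definition 2.3 is exactly their equality. With this in hand the commutation is immediate from graded Jacobi applied to $\mu$ (degree $1$), $\phi_L$ (degree $0$), $f$: $[\mu,[\phi_L,f]]=[[\mu,\phi_L],f]+[\phi_L,[\mu,f]]=[\phi_L,[\mu,f]]$. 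Feeding this into $d\delta f=-(-1)^n[\mu,[\phi_L,f]]$ and $\delta df=-(-1)^n[\phi_L,[\mu,f]]$ shows the two sides agree; the only thing to watch is that the degree-dependent sign $(-1)^n$ occurs identically on both sides (since $\delta$ is degree-preserving), so it cancels. This disposes of the $L$-coefficient case cleanly.

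To reach an arbitrary representation $(M,\phi_M)$, where the graded Lie algebra machinery is not directly available, I would pass to the semidirect product 3-LieDer pair $(L\oplus M,\phi_L\oplus\phi_M)$ supplied by the Proposition above. Any $f\in C^n(L,M)$ lifts to $\widehat{f}\in C^n(L\oplus M,L\oplus M)$ by evaluating only on the $L$-components of its inputs and landing in the $M$-summand, and one checks that this lift intertwines the two constructions, i.e. $\widehat{df}=d_{L\oplus M}\widehat{f}$ and $\widehat{\delta f}=\delta_{L\oplus M}\widehat{f}$; the self-coefficient identity applied to $L\oplus M$ then yields $d\delta=\delta d$ on $C^{\ast}(L,M)$ after projecting. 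The main obstacle I anticipate is precisely verifying this intertwining, which is where the representation axioms of Definition 2.2 and the $\phi_M$-compatibility of Definition 2.5 get used (exactly the data that made $\phi_L\oplus\phi_M$ a derivation in the Proposition), together with keeping the signs straight. As an alternative that avoids the reduction, one can instead expand $d(\delta f)$ and $\delta(df)$ directly from the formulas and match terms; in that route the delicate cancellations are those coming from the final double sum of $d$ and from the clause where $\phi_L$ lands inside a bracket $[x_j,y_j,x_{n+1}]$, and these close up by the derivation identity and the representation relations.
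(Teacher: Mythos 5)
Your proposal is correct, and its core is exactly the paper's own argument: the paper likewise observes that in the self-representation case $\delta f = -[\phi_L,f]$ and $df=(-1)^n[\mu,f]$, then applies the graded Jacobi identity of the bracket from Rotkiewicz's complex together with $[\mu,\phi_L]=0$ (the reformulation of the derivation property) to conclude $d\circ\delta=\delta\circ d$; your sign bookkeeping is in fact slightly more careful than the paper's, which drops an overall minus sign on both sides of its computation (harmlessly, since it cancels). Where you genuinely differ is that the paper stops there: its proof treats \emph{only} the case $(M,\phi_M)=(L,\phi_L)$, even though the lemma is stated, and later used (e.g.\ with trivial coefficients in the central-extension section), for an arbitrary representation $(M,\phi_M)$. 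Your second step --- lifting $f\in C^n(L,M)$ to $\widehat{f}\in C^n(L\oplus M,L\oplus M)$ supported on $L$-inputs and valued in the $M$-summand, checking that this lift intertwines both $d$ and $\delta$ (which works because the semidirect-product bracket with one entry from $M$ reduces to the action $\rho$, and $\phi_L\oplus\phi_M$ restricts compatibly), and then using injectivity of $f\mapsto\widehat{f}$ --- closes precisely the gap the paper leaves open, so your argument is not merely equivalent but strictly more complete. The only part you would still need to write out in full is the intertwining verification itself, but the properties you cite (the representation axioms and the compatibility of $\phi_M$ from the definition of a representation of a 3-LieDer pair) are indeed exactly what makes it go through.
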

\begin{proof}
Note that in case of self representation, that is, when $(M, \phi_M) = (L, \phi_L)$, we have 
$$\delta(f) = - [\phi_L, f],~\text{for all}~f\in C^n(L, L).$$
Therefore, we have
\begin{align*}
(d\circ \delta)(f) &= -d[\phi_L, f]\\
                            &= (-1)^n[\mu, [\phi_L, f]]\\
                            & = (-1)^n [[\mu,\phi_L], f] + (-1)^n[\phi_L,[\mu,f]]\\
                            &= (-1)^n[\phi_L,[\mu,f]]\\
                            &= (\delta\circ \d)(f)
\end{align*}
\end{proof}
We are now in a position to define the cohomology of the 3-LieDer pair. 
We define a map $\partial: C^n_{\text{3-LieDer}} (L,M)\rightarrow C^{n+1}_{\text{3-LieDer}} (L,M)$ by
\begin{eqnarray*}
&&\partial f=(d f,  -\delta f), ~~~\mbox{for all}~~~ f\in C^1_{\text{3-LieDer}} (L,M),\\
&&\partial (f_n,  \overline{f_n}) =(df_n,  d\overline{f}_n+(-1)^{n}\delta f_n),~~~\mbox{for all}~~~(f_n,  \overline{f}_n)\in C^{n}_{\text{3-LieDer}} (L,M).
\end{eqnarray*}
\begin{proposition}
The map $\partial$ satisfies $\partial\circ \partial=0$.
\end{proposition}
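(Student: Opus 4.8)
The plan is to verify $\partial\circ\partial=0$ by a direct computation on each of the two types of cochain, relying on exactly two inputs already available: first, the differential $d$ of the ordinary $3$-Lie algebra cochain complex satisfies $d\circ d=0$ (this is the identity $d^{n+1}\circ d^n=0$ recalled above); and second, by the preceding lemma $\delta$ commutes with $d$, i.e. $d\circ\delta=\delta\circ d$. Everything else is sign bookkeeping, so I would isolate these two facts at the outset and then let the computation run mechanically.

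First I would handle the lowest-degree case, a $1$-cochain $f\in C^1_{\text{3-LieDer}}(L,M)=\text{Hom}(L,M)$. Here $\partial f=(df,-\delta f)$, which lives in $C^2_{\text{3-LieDer}}(L,M)=C^2(L,M)\oplus C^1(L,M)$. Regarding this as a pair $(f_2,\overline{f}_2)$ with $f_2=df$, $\overline{f}_2=-\delta f$ and applying the second defining formula with $n=2$, I get $\partial(\partial f)=\big(d(df),\ d(-\delta f)+(-1)^2\delta(df)\big)=\big(ddf,\ -d\delta f+\delta df\big)$. The first slot vanishes by $d\circ d=0$, and the second slot vanishes by the lemma, so $\partial\partial f=0$.

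Next I would treat the generic case $(f_n,\overline{f}_n)\in C^n_{\text{3-LieDer}}(L,M)$ for $n\geq 2$. Applying the definition of $\partial$ twice (the second time with $n$ replaced by $n+1$, since the output of the first application sits in degree $n+1$), I obtain
\[
\partial\partial (f_n,\overline{f}_n)=\partial\big(df_n,\ d\overline{f}_n+(-1)^n\delta f_n\big)=\Big(ddf_n,\ d\big(d\overline{f}_n+(-1)^n\delta f_n\big)+(-1)^{n+1}\delta(df_n)\Big).
\]
The first component is $0$ by $d^2=0$. Expanding the second component gives $dd\overline{f}_n+(-1)^n d\delta f_n+(-1)^{n+1}\delta df_n$; the first term is again $0$, and using $d\delta=\delta d$ on the middle term the remaining two terms become $\big((-1)^n+(-1)^{n+1}\big)\delta df_n=0$. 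Hence $\partial\partial (f_n,\overline{f}_n)=0$, completing the verification.

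I expect no serious obstacle: the entire content is contained in the two inputs $d^2=0$ and the commutation lemma, and the only point requiring care is that the sign $(-1)^n$ attached to $\delta f_n$ in the second slot, together with the extra sign $(-1)^{n+1}$ produced when $\partial$ is applied a second time, is precisely what forces the two cross terms to cancel. The main care is therefore purely in tracking the degree-dependent signs and keeping the $C^{n+1}(L,M)$-part and the $C^{n}(L,M)$-part of the pair separate; once that is done, the identity follows at once.
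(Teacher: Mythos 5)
Your proof is correct and follows essentially the same route as the paper's: both verify the identity separately for $1$-cochains and for general pairs $(f_n,\overline{f}_n)$, reducing everything to $d\circ d=0$ and the commutation lemma $d\circ\delta=\delta\circ d$, with the cross terms cancelling via the signs $(-1)^n$ and $(-1)^{n+1}$. Your write-up is in fact slightly more careful than the paper's (which contains typographical slips in the intermediate expressions), but the argument is the same.
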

{\bf Proof.}  For any $f\in C^1_{\text{3-LieDer}} (L,M)$, we have
\begin{eqnarray*}
(\partial\circ \partial) f= \partial (d f,  -\delta f)=((d\circ d)f,  -(d\circ\delta) f+ (\delta\circ d) f )=0.
\end{eqnarray*}
Similarly, for any $(f_n,  \overline{f}_n)\in C^{n}_{\text{3-LieDer}} (L,M)$, we have
\begin{eqnarray*}
(\partial\circ \partial) (f_n,  \overline{f_n})&=&\partial (df_n, df_n+(-1)^{n}f_n)\\
&=& (d^2f_n, d^2\overline{f_n}+(-1)^{n}d\delta f_n+(-1)^{n+1}\delta df_n)\\
&=&0.
\end{eqnarray*}
Hence the proof is finished. \hfill $\square$

Therefore, $(C^{\ast}_{\text{3-LieDer}} (L,M), \partial)$ forms a cochain complex. We denote the corresponding cohomology groups by $H^{\ast}_{\text{3-LieDer}} (L,M)$.

 \section{ Central extensions of 3-LieDer pairs}
\def\theequation{\arabic{section}. \arabic{equation}}
\setcounter{equation} {0}
In this section, we study central extensions of a 3-LieDer pair. Similar to the classical cases, we show that isomorphic classes of central extensions are classified by the second cohomology of the 3-LieDer pair with coefficients in the trivial representation.

 Let $(L,  \phi_L)$ be a 3-LieDer pair and  $(M,  \phi_{M})$ be an abelian 3-LieDer pair  i.e, the 3-Lie algebra  bracket of $M$
is trivial.
\begin{definition}
A central extension of $(L,  \phi_L)$ by $(M,  \phi_{M})$  is an exact sequence of
3-LieDer pairs
\begin{eqnarray}
\xymatrix@C=0.5cm{
  0 \ar[r] & (M,  \phi_{M}) \ar[rr]^{i} && (\hat{L}, \phi_{\hat{L}}) \ar[rr]^{p} && (L,  \phi_L) \ar[r] & 0 }
\end{eqnarray}
such that $[i(m), \hat{x}, \hat{y}]=0$, for all $m\in M$ and $\hat{x}, \hat{y}\in \hat{L}$.
\end{definition}

In a central extension, using the map $i$ we can identify $M$ with the corresponding subalgebra of $\hat{L}$ and with this  $\phi_{M}=\phi_{\hat{L}}|_{M}$.
\begin{definition}
 Two central extensions $(\hat{L},  \phi_{\hat{T}})$ and $(\hat{L'},  \phi_{\hat{L'}})$ are said to be isomorphic if there is an isomorphism
 $\eta: (\hat{L},  \phi_{\hat{L}})\rightarrow (\hat{L'},  \phi_{\hat{L'}})$ of 3-LieDer pairs that makes
the following diagram commutative

\begin{eqnarray*}
\aligned
\xymatrix{
0  \ar[rr] && (M,  \phi_{M}) \ar[d]^{Id_M}\ar[rr]^{i} & & (\hat{L},  \phi_{\hat{L}})  \ar[d]^{\eta} \ar[rr]^{p} & & (L,  \phi_L) \ar[d]^{Id_L} \ar[rr] & & 0 \\
0 \ar[rr]& & (M,  \phi_{M}) \ar[rr]^{i'} && (\hat{L'}, \phi_{\hat{L'}}) \ar[rr]^{q} & & (L,  \phi_L)  \ar[rr] & & 0.}
\endaligned
\end{eqnarray*}
\end{definition}
Let Eq.(3.1) be a central extension of $(L,  \phi_L)$. A section of the map $p$ is given by a linear map $s : L\rightarrow \hat{L}$ such
that $p\circ s=Id_L$.

For any section $s$, we define linear maps $\psi: L\wedge L\wedge L \rightarrow M$ and  $\chi: L\rightarrow M$ by
\begin{eqnarray*}
\psi(x, y, z):=[s(x), s(y), s(z)]-s([x, y, z]),~~~~~\chi(x)=\phi_{\hat{L}}(s(x))-s(\phi_L(x)),~~\text{for all}~ x, y, z\in L.
\end{eqnarray*}

Note that the vector space $\hat{L}$ is isomorphic to the direct sum $L\oplus M$ via the section $s$. Therefore,
we may transfer the structures  of $\hat{L}$ to $L\oplus M$. The product and  linear
maps on $L\oplus M$ are given by
\begin{eqnarray*}
&&[(x, m), (y, n), (z, p)]_{\psi}=([x, y, z], \psi(x, y, z)), \\
&& \phi_{L\oplus M}(x, m)=(\phi_L(x), \phi_{M}(m)+\chi(x)).
\end{eqnarray*}
\begin{proposition}
 The vector space $L\oplus M$ equipped with the above product and linear maps $\phi_{L\oplus M}$ forms a 3-LieDer pair if and only if $(\psi, \chi)$ is a 2-cocycle in the
cohomology of the 3-LieDer pair $(L, \phi_L)$ with coefficients in the trivial representation $M$. Moreover, the
cohomology class of $(\psi,  \chi)$ does not depend on the choice of the section $s$.
\end{proposition}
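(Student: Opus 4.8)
The plan is to split every defining condition of a 3-LieDer pair on $L\oplus M$ into its $L$-component and its $M$-component, to observe that the $L$-components hold automatically, and to identify the $M$-components with the two equations encoded in the single relation $\partial(\psi,\chi)=0$. Throughout I use that $M$ carries the trivial representation, so in the formulas of Section 2 every term containing a $\rho$ vanishes; in particular $d^1 g(a,b,c)=-g([a,b,c])$ for $g\in C^1(L,M)$, and $\delta\psi(x,y,z)=\psi(\phi_L(x),y,z)+\psi(x,\phi_L(y),z)+\psi(x,y,\phi_L(z))-\phi_M(\psi(x,y,z))$.

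First I would show that $(L\oplus M,[\cdot,\cdot,\cdot]_\psi)$ is a 3-Lie algebra if and only if $d\psi=0$. Since the bracket $[(x,m),(y,n),(z,p)]_\psi=([x,y,z],\psi(x,y,z))$ has an $L$-component independent of the $M$-slots, the $L$-component of the Jacobi identity (\ref{3-lie identity}) is inherited from $L$. Writing out the $M$-component of (\ref{3-lie identity}) for $[\cdot,\cdot,\cdot]_\psi$ yields
$$\psi(x,y,[u,v,w])=\psi([x,y,u],v,w)+\psi(u,[x,y,v],w)+\psi(u,v,[x,y,w]),$$
and comparing this, using the full skew-symmetry of $\psi$, with the trivial-coefficient value of $d^2\psi$ shows that it is exactly the condition $d\psi=0$.

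Next I would impose that $\phi_{L\oplus M}(x,m)=(\phi_L(x),\phi_M(m)+\chi(x))$ be a derivation. Applying $\phi_{L\oplus M}$ to a bracket and expanding by the Leibniz rule, the $L$-components agree because $\phi_L$ is a derivation of $L$, while equating $M$-components gives
$$\phi_M(\psi(x,y,z))+\chi([x,y,z])=\psi(\phi_L(x),y,z)+\psi(x,\phi_L(y),z)+\psi(x,y,\phi_L(z)).$$
By the formulas recalled above this is precisely $\delta\psi=-d\chi$, i.e. $d\chi+\delta\psi=0$. Combining the two steps, $(L\oplus M,\phi_{L\oplus M})$ is a 3-LieDer pair if and only if $d\psi=0$ and $d\chi+\delta\psi=0$, which is the single relation $\partial(\psi,\chi)=(d\psi,\,d\chi+\delta\psi)=0$; that is, $(\psi,\chi)$ is a $2$-cocycle.

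For the last assertion, let $s,s'$ be two sections and set $\lambda:=s'-s$. Since $p\circ s=p\circ s'=Id_L$, the map $\lambda$ takes values in $M$, so $\lambda\in\mathrm{Hom}(L,M)=C^1_{\text{3-LieDer}}(L,M)$. Substituting $s'=s+\lambda$ into the definitions of $\psi'$ and $\chi'$, discarding every bracket containing a $\lambda$-term by centrality of $M$, and using $\phi_{\hat{L}}|_M=\phi_M$, one finds $\psi'=\psi+d\lambda$ and $\chi'=\chi-\delta\lambda$; hence $(\psi',\chi')-(\psi,\chi)=(d\lambda,-\delta\lambda)=\partial\lambda$, so the two cocycles are cohomologous and determine the same class in $H^2_{\text{3-LieDer}}(L,M)$. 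The one genuinely delicate point is the Jacobi step: expanding the $M$-component of the Jacobi identity and recognizing it term-by-term as $d^2\psi$, where one must track the signs and the skew-symmetric slots in the general differential so that the surviving (non-$\rho$) terms line up exactly; everything else is routine bookkeeping.
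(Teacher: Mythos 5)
Your proposal is correct and follows essentially the same route as the paper: both reduce the 3-LieDer pair axioms on $L\oplus M$ to their $M$-components (the $L$-components being automatic), identify these with $d\psi=0$ and $d\chi+\delta\psi=0$, i.e.\ $\partial(\psi,\chi)=0$, and prove section-independence by writing the difference of sections as a map $L\to M$ whose coboundary accounts for the change in $(\psi,\chi)$. The only difference is cosmetic (you set $\lambda=s'-s$ where the paper uses $u=s_1-s_2$, so your coboundary appears with the opposite overall sign), and your sign bookkeeping with the trivial-representation formulas for $d$ and $\delta$ is consistent with the paper's conventions.
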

{\bf Proof.}  The tuple $(L \oplus M,  \phi_{L\oplus M})$ is a 3-LieDer pair if and only if the following equations holds:
\begin{eqnarray}
&&[(x, m), (y, n), [(z, p), (v, k), (w, l)]_{\psi}]_\psi \nonumber\\
&=&  [[(x, m),(y,n),(z,p)]_\psi , (v,k), (w,l)]_\psi + [ (z,p),[(x,m),(y,n),(v,k)]_\psi ,(w,l)]_\psi \nonumber\\
&&+[(z,p), (v,k),[(x,m),(y,n),(w,l)]_\psi ]_\psi,\\
and,\\
&& \phi_{L\oplus M}[(x, m), (y, n), (z, p)]_{\psi}\nonumber\\
&=& [\phi_{L\oplus M}(x, m), (y, n), (z, p)]_{\psi}+[(x, m), \phi_{L\oplus M}(y, n), (z, p)]_{\psi}\nonumber\\
&&+[(x, m), (y, n), \phi_{L\oplus M}(z, p)]_{\psi},
\end{eqnarray}
for all $x \oplus m, y\oplus n, z\oplus p, v\oplus k, w\oplus l\in L\oplus M$. The condition Eq.(3.2) is equivalent to
\begin{eqnarray*}
 \psi(x, y, [z, v, w]) = \psi([x, y, z], v, w)+ \psi(z, [x, y, v], w)+\psi(z, v,  [x, y, w]),
\end{eqnarray*}
or, equivalently, $d(\psi)=0$, as we are considering only trivial representation. The condition Eq.(3.3) is equivalent to
\begin{eqnarray*}
&& \phi_{M}(\psi(x, y, z))+\chi([x, y, z])= \psi(\phi_L(x), y, z)+\psi(x, \phi_L(y), z)+\psi(x, y, \phi_L(z)).
\end{eqnarray*}
This is same as $d(\chi) + \delta\psi = 0$. This implies $(\psi, \chi)$ is a $2$-cocycle.

Let $s_1, s_2$ be two sections of $p$. Define a map $u: L \rightarrow  M$ by $u(x):= s_1(x)-s_2(x)$.  Observe that
\begin{eqnarray*}
&&\psi(x, y, z)\\
&=&[s_1(x), s_1(y), s_1(z)]-s_1([x, y, z])\\
&=& [s_2(x)+u(x), s_2(y)+u(y), s_2(z)+u(z)]-s_2([x, y, z])-u([x, y, z])\\
&=& \psi'(x, y, z)-u([x, y, z]),
\end{eqnarray*}
as $u(x), u(y), u(z)\in M$ and $(M,\phi_M)$ be an abelian $3$-LieDer pair.

Also note that
\begin{eqnarray*}
\chi(x)&=&\phi_{\hat{L}}(s_1(x))-s_1(\phi_L(x))\\
&=&\phi_{\hat{L}}(s_2(x)+u(x))-s_2(\phi_L(x))-u(\phi_L(x))\\
&=& \chi'(x)+\phi_{M}(u(x))-u(\phi_L(x)).
\end{eqnarray*}
This shows that  $(\psi,  \chi)-(\psi',  \chi')=\partial u$. Hence they correspond to the same
cohomology class.  \hfill $\square$
\begin{theorem}
 Let $(L,  \phi_L)$ be a 3-LieDer pair and  $(M, \phi_{M})$ be an abelian 3-LieDer pair. Then the isomorphism classes of central extensions of $L$ by $M$ are classified by the second cohomology group $H^{2}_{\text{3-LieDer}} (L, M)$.
\end{theorem}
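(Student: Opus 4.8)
The plan is to set up a bijection between the set of isomorphism classes of central extensions and the second cohomology group $H^{2}_{\text{3-LieDer}}(L,M)$, and to do this I would exploit the preceding proposition, which already does most of the conceptual work. The strategy has two directions. First I would build a well-defined map from isomorphism classes of central extensions to $H^{2}_{\text{3-LieDer}}(L,M)$: given a central extension as in Eq.(3.1), choose a section $s$ of $p$, form the pair $(\psi,\chi)$ as defined just before the proposition, and send the extension to the cohomology class $[(\psi,\chi)]$. The proposition already guarantees that $(\psi,\chi)$ is a $2$-cocycle and that its cohomology class is independent of the choice of section $s$, so this assignment is at least well-defined on a fixed extension. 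The remaining point in this direction is to check that isomorphic central extensions produce cohomologous cocycles: given an isomorphism $\eta:(\hat L,\phi_{\hat L})\to(\hat L',\phi_{\hat L'})$ compatible with the commuting diagram, I would observe that if $s$ is a section of $p$ then $\eta\circ s$ is a section of $q$, and then compute the corresponding $(\psi',\chi')$ for $\eta\circ s$. Because $\eta$ restricts to the identity on $M$ and covers $\mathrm{Id}_L$, and because it commutes with the bracket and the derivations, the two cocycles will literally coincide, so in particular they define the same class.

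For the reverse direction I would construct, from a $2$-cocycle $(\psi,\chi)\in C^2_{\text{3-LieDer}}(L,M)$, a central extension. Here I invoke the proposition in the other direction: on the vector space $L\oplus M$ I define the bracket $[\cdot,\cdot,\cdot]_\psi$ and the linear map $\phi_{L\oplus M}$ exactly by the formulas displayed before the proposition, namely $[(x,m),(y,n),(z,p)]_\psi=([x,y,z],\psi(x,y,z))$ and $\phi_{L\oplus M}(x,m)=(\phi_L(x),\phi_M(m)+\chi(x))$. Since $(\psi,\chi)$ is a $2$-cocycle, the proposition tells us that $(L\oplus M,\phi_{L\oplus M})$ is a $3$-LieDer pair; the centrality condition $[i(m),\hat x,\hat y]=0$ is immediate from the formula for $[\cdot,\cdot,\cdot]_\psi$ because the $M$-component of the bracket depends only on the $L$-components of the inputs. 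Taking $i:M\hookrightarrow L\oplus M$ and $p:L\oplus M\twoheadrightarrow L$ the obvious inclusion and projection yields a central extension, giving a map $H^{2}_{\text{3-LieDer}}(L,M)\to\{\text{central extensions}\}$.

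I would then verify the two composites are identities, that is, that the two maps are mutually inverse on the level of isomorphism classes and cohomology classes. Starting from a cocycle $(\psi,\chi)$, building the extension $L\oplus M$, and then choosing the canonical section $s(x)=(x,0)$ recovers exactly $(\psi,\chi)$, so one composite is the identity on cohomology. Conversely, starting from an arbitrary central extension $\hat L$ with section $s$, the identification $\hat L\cong L\oplus M$ as a vector space transports the structure of $\hat L$ to the $(\psi,\chi)$-twisted structure on $L\oplus M$, and one checks this vector-space isomorphism is in fact an isomorphism of $3$-LieDer pairs making the extension diagram commute; hence the original extension is isomorphic to the one reconstructed from its cocycle. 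I also need to confirm that cohomologous cocycles give isomorphic extensions: if $(\psi,\chi)-(\psi',\chi')=\partial u$ for some $u\in C^1_{\text{3-LieDer}}(L,M)=\mathrm{Hom}(L,M)$, then the map $\eta:L\oplus M\to L\oplus M$, $\eta(x,m)=(x,m+u(x))$, is the required isomorphism of extensions, and verifying that $\eta$ respects both the bracket and the derivation $\phi_{L\oplus M}$ is exactly the content of the coboundary relation $\partial u$ unwound into its two components (the bracket component recording $u([x,y,z])$ and the derivation component recording $\phi_M(u(x))-u(\phi_L(x))$), precisely the terms appearing in the section-independence computation of the previous proposition.

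The main obstacle I anticipate is bookkeeping rather than conceptual: the delicate step is checking that $\eta(x,m)=(x,m+u(x))$ is simultaneously a morphism of $3$-Lie brackets and intertwines the derivations, because this requires carefully matching the two components of $\partial u$ against the respective discrepancies in the bracket and in $\phi_{L\oplus M}$. Most of the heavy algebraic verification, however, has already been discharged by the preceding proposition, so the proof of the theorem itself is largely a matter of assembling these pieces into the claimed bijection and checking that all maps are well-defined on equivalence classes.
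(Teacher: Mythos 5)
Your proposal is correct and follows essentially the same route as the paper: both directions rest on the preceding proposition, with isomorphic extensions shown to yield literally the same cocycle via the transported section $\eta\circ s$, and cohomologous cocycles shown to yield isomorphic extensions via $\eta(x,m)=(x,m+v(x))$. You are somewhat more careful than the paper in spelling out that the two assignments are mutually inverse (the paper leaves the composite checks and the centrality of the reconstructed extension implicit), but this is a matter of completeness, not a different method.
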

{\bf Proof.}   Let $(\hat{L},  \phi_{\hat{L}})$ and $(\hat{L'}, \phi_{\hat{L'}})$  be two isomorphic central extensions and the isomorphism is given by $\eta: \hat{L}\rightarrow \hat{L'}$. Let $s:L\rightarrow \hat{L}$ be a section of $p$. Then
\begin{eqnarray*}
p'\circ (\eta\circ s)=(p'\circ \eta)\circ s=p\circ s=Id_L.
\end{eqnarray*}
This shows that $s':=\eta\circ s$ is a section of $p'$. Since $\eta$ is a morphism of 3-LieDer pairs, we have
$\eta|_M = Id_M$. Thus,
\begin{eqnarray*}
\psi'(x, y, z)&=&[s'(x), s'(y), s'(z)]-s'([x, y, z])\\
&=& \eta([s(x), s(y), s(z)]-[x, y, z])\\
&=& \psi(x, y, z),
\end{eqnarray*}
and
\begin{eqnarray*}
\chi'(x)&=&\phi_{\hat{L'}}(s'(x))-s'(\phi_L(x))\\
&=&\phi_{\hat{L'}}(\eta\circ s(x))-\eta\circ s(\phi_L(x))\\
&=& \phi_{\hat{L}}(s(x))- s(\phi_L(x))\\
&=&\chi(x).
\end{eqnarray*}
Therefore, isomorphic central extensions give rise to the same 2-cocycle, hence, correspond to the same
element in $H^{2}_{3-LieDer} (L, M)$.

Conversely, let $(\psi,  \chi )$ and $(\psi',  \chi')$ be two cohomologous 2-cocycles. Therefore,
there exists a map $v: L \rightarrow  M$ such that
\begin{eqnarray*}
(\psi,  \chi)-(\psi',  \chi')=\partial v.
\end{eqnarray*}
The 3-LieDer pair structures on $L \oplus M$  corresponding to the above 2-cocycles are isomorphic via the map $\eta: L \oplus M\rightarrow L \oplus M$ given by $\eta(x, m) = (x, m+v(x))$. This proves our theorem. \hfill $\square$

 \section{Extensions of a pair of derivations}
\def\theequation{\arabic{section}. \arabic{equation}}
\setcounter{equation} {0}
It is well-known that derivations are infinitesimals of automorphisms, and a study \cite{BS17} has been done on extensions of a pair of automorphisms of Lie-algebras. In this section, we study extensions of a pair of derivations and see how it is related to the cohomology of the 3-LieDer pair.

Let
\begin{eqnarray}
\xymatrix@C=0.5cm{
  0 \ar[r] & M \ar[rr]^{i} && \hat{L} \ar[rr]^{p} && L \ar[r] & 0 }
\end{eqnarray}
be a fixed central extensions of 3-Lie algebras. Given a pair of derivations $(\phi_L, \phi_M)\in Der(L)\times Der(M)$,
here we study extensions of them to a derivation $\phi_{\hat{L}}\in Der(\hat{L})$ which makes
\begin{eqnarray}
\xymatrix@C=0.5cm{
  0 \ar[r] & (M,  \phi_{M}) \ar[rr]^{i} && (\hat{L}, \phi_{\hat{L}}) \ar[rr]^{p} && (L,  \phi_L) \ar[r] & 0 }
\end{eqnarray}
into an exact sequence of 3-LieDer pairs.   In such a case, the pair $(\phi_L, \phi_M)\in Der(L)\times Der(M)$ is said to be extensible.

Let  $s: L\rightarrow \hat{L}$ be a section of $Eq.(4.1)$, we define a map $\psi: L\o L\o L \rightarrow M$  by
\begin{eqnarray*}
\psi(x, y, z):=[s(x), s(y), s(z)]-s([x, y, z]),~~~~~\chi(x)=\phi_{\hat{L}}(s(x))-s(\phi_L(x)),~~\forall x, y, z\in L.
\end{eqnarray*}

Given a pair of derivations $(\phi_L, \phi_M)\in Der(L)\times Der(M)$, we define another map $Ob_{(\phi_L, \phi_M)}^{\hat{L}}: L\o L\o L \rightarrow M$ by
\begin{eqnarray*}
Ob_{(\phi_L, \phi_M)}^{M}( x, y, z):=\phi_{M}(\psi(x, y, z))-\psi(\phi_L(x), y, z)-\psi(x, \phi_L(y), z)-\psi(x, y, \phi_L(z)).
\end{eqnarray*}
\begin{proposition}
The map $Ob_{(\phi_L, \phi_M)}^{\hat{L}}: L\o L\o L \rightarrow M$ is a 2-cocycle in the cohomology of the 3-Lie algebra
$L$ with coefficients in the trivial representation a. Moreover, the cohomology class $[Ob_{(\phi_L, \phi_M)}^{\hat{L}} ] \in H^2(L, M)$ does not depend on the choice of sections.
\end{proposition}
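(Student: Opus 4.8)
The plan is to identify the obstruction cochain with the image of the central-extension cocycle $\psi$ under the derivation operator $\delta$, and then to reduce everything to the commutation Lemma $d\circ\delta=\delta\circ d$ together with the cocycle condition on $\psi$. First I would record the key identity
$$Ob_{(\phi_L,\phi_M)}^{\hat{L}}=-\,\delta\psi .$$
This is immediate from the definitions: $\psi\in C^2(L,M)$, and writing out $\delta$ on a $2$-cochain gives $\delta\psi(x,y,z)=\psi(\phi_L(x),y,z)+\psi(x,\phi_L(y),z)+\psi(x,y,\phi_L(z))-\phi_M(\psi(x,y,z))$, which is exactly $-Ob_{(\phi_L,\phi_M)}^{\hat{L}}(x,y,z)$.

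Next I would use that $\psi$ is a $2$-cocycle of the $3$-Lie algebra $L$ with trivial coefficients, i.e. $d\psi=0$. Since $\hat{L}$ is a $3$-Lie algebra and the sequence Eq.(4.1) is central, transporting the bracket of $\hat{L}$ to $L\oplus M$ along the section $s$ forces the Jacobi identity, which is precisely $\psi(x,y,[z,v,w])=\psi([x,y,z],v,w)+\psi(z,[x,y,v],w)+\psi(z,v,[x,y,w])$; this is the same computation as the one establishing $d\psi=0$ in the central-extension proposition of Section 3. With these two facts in hand, the cocycle condition for the obstruction follows directly from the Lemma:
$$d\bigl(Ob_{(\phi_L,\phi_M)}^{\hat{L}}\bigr)=-\,d(\delta\psi)=-\,\delta(d\psi)=-\,\delta(0)=0 ,$$
so $Ob_{(\phi_L,\phi_M)}^{\hat{L}}$ is a $2$-cocycle in the cohomology of $L$ with trivial coefficients.

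For independence of the section, let $s_1,s_2$ be two sections and set $u:=s_1-s_2$, which lands in $M$ because $p\circ(s_1-s_2)=0$. As in the Section 3 proposition, centrality gives $\psi_1-\psi_2=du$, where for the trivial representation $d^1u(x,y,z)=-u([x,y,z])$. Writing $Ob_i=-\delta\psi_i$ and applying $\delta$ followed by the Lemma once more yields
$$Ob_1-Ob_2=-\,\delta(\psi_1-\psi_2)=-\,\delta(du)=-\,d(\delta u)=d(-\,\delta u) .$$
Since $-\delta u\in C^1(L,M)$, the difference $Ob_1-Ob_2$ is a coboundary, so $[Ob_{(\phi_L,\phi_M)}^{\hat{L}}]\in H^2(L,M)$ does not depend on the choice of $s$.

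The one point deserving care is the use of the Lemma $d\circ\delta=\delta\circ d$: in the text it is verified through the self-representation identity $\delta f=-[\phi_L,f]$, whereas here the coefficients form the trivial module $M$, so one should confirm that the Lemma is being invoked in the generality in which it was stated. Should one prefer to avoid this, the alternative is to expand $d\bigl(Ob_{(\phi_L,\phi_M)}^{\hat{L}}\bigr)$ on five arguments, substitute $Ob=-\delta\psi$, and collapse the resulting terms using $d\psi=0$ together with the derivation rules for $\phi_L$ and $\phi_M$; this is routine but notation-heavy, and I expect this bookkeeping to be the main obstacle if the slick route is not taken.
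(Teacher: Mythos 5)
Your proof is correct, and it takes a genuinely different route from the paper's. The paper proves the cocycle condition by direct expansion: it writes out $d\bigl(Ob_{(\phi_L,\phi_M)}^{\hat{L}}\bigr)$ on five arguments, substitutes the definition of the obstruction, and cancels all terms using the derivation identity for $\phi_L$ together with the cocycle identity $d\psi=0$ --- exactly the ``routine but notation-heavy'' computation you chose to defer; similarly, for section-independence it computes by hand that ${}^{1}Ob_{(\phi_L,\phi_M)}^{\hat{L}}-{}^{2}Ob_{(\phi_L,\phi_M)}^{\hat{L}}=d(\phi_M\circ u-u\circ\phi_L)$, which is precisely your $d(-\delta u)$, since $\delta u=u\circ\phi_L-\phi_M\circ u$ on $1$-cochains. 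Your route --- the identification $Ob_{(\phi_L,\phi_M)}^{\hat{L}}=-\delta\psi$ followed by the commutation lemma of Section 2 asserting $d\circ\delta=\delta\circ d$ --- packages all of those cancellations into a single previously established statement, which is shorter and makes the mechanism transparent: the obstruction is $\delta$ of a cocycle, hence a cocycle, and a change of section moves $\psi$ by a coboundary $du$, hence moves the obstruction by the coboundary $d(-\delta u)$. What your approach buys is brevity and reusability; what the paper's buys is independence from that lemma, and this matters for exactly the reason you flag yourself: the lemma is stated for arbitrary coefficients but is proved in the paper only for the self-representation $(M,\phi_M)=(L,\phi_L)$, via $\delta f=-[\phi_L,f]$ and the graded Jacobi identity. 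For the trivial representation the lemma does hold and is easy to verify --- when $\rho=0$ the differential consists only of pre-compositions with the bracket, $\delta$ consists of pre-compositions with $\phi_L$ plus post-composition with $\phi_M$, and their commutator collapses to the derivation property of $\phi_L$, while $\phi_M$ slides past the bracket insertions trivially --- so to make your argument fully self-contained you should record this short verification (or fall back on the direct expansion, which is what the paper actually does). With that one point supplied, your proof is complete and arguably cleaner than the paper's.
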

{\bf Proof.}   First observe that $\psi$ is a 1-cocycle in the cohomology of the 3-Lie  algebra $L$ with coefficients
in the trivial representation $M$. Thus, we have
\begin{eqnarray*}
&&(dOb_{(\phi_L, \phi_M)}^{M})( x, y, u, v, w)\\
&=& -Ob_{(\phi_L, \phi_M)}^{M}(x, y, [u,v,w])+Ob_{(\phi_L, \phi_M)}^{M}([x,y,u], v, w)+Ob_{(\phi_L, \phi_M)}^{M}(u, [x,y,v], w)\\
&&+Ob_{(\phi_L, \phi_M)}^{M}(u, v,[x,y,w])\\
&=&  -\phi_{M}(\psi(x, y, [u,v,w]))+\psi(\phi_L(x), y, [u,v,w])+\psi(x, \phi_L(y), [u,v,w])\\
&&+\psi(x, y, \phi_L([u,v,w]))+\phi_{M}(\psi([x,y,u], v, w))-\psi(\phi_L([x,y,u]), v, w) \\
&&-\psi([x,y,u], \phi_L(v), w)-\psi([x,y,u], v, \phi_L(w)) +\phi_{M}(\psi(u, [x,y,v], w))\\
&& -\psi(\phi_L(u), [x,y,v], w)-\psi(u, \phi_L([x,y,v]), w)-\psi(u, [x,y,v], \phi_L(w)) \\
&&+\phi_{M}(\psi(u, v,[x,y,w]))-\psi(\phi_L(u), v, [x,y,w])-\psi(u, \phi_L(v), [x,y,w])\\
&&-\psi(u, v, \phi_L([x,y,w])) \\
&=& \psi(\phi_L(x), y, [u,v,w])+\psi(x, \phi_L(y), [u,v,w])+\psi(x, y, \phi_L([u,v,w]))\\
&&-\psi(\phi_L([x,y,u]), v, w) -\psi([x,y,u], \phi_L(v), w)-\psi([x,y,u], v, \phi_L(w)) \\
&& -\psi(\phi_L(u), [x,y,v], w)-\psi(u, \phi_L([x,y,v]), w)-\psi(u, [x,y,v], \phi_L(w)) \\
&&-\psi(\phi_L(u), v, [x,y,w])-\psi(u, \phi_L(v), [x,y,w])-\psi(u, v, \phi_L([x,y,w])) \\
&=&0.
\end{eqnarray*}
Therefore, $Ob_{(\phi_L, \phi_M)}^{\hat{L}}$ is a 2-cocycle. To prove  the second part, let $s_1$ and $s_2$ be two sections of Eq.(4.1).
Consider the map $u:L\rightarrow M$ given by $u(x):= s_1 (x)-s_2 (x)$. Then
\begin{eqnarray*}
\psi_1(x, y, z)=\psi_2(x, y, z)-u[x, y, z].
\end{eqnarray*}
If $^1Ob_{(\phi_L, \phi_M)}^{\hat{L}}$ and $^2Ob_{(\phi_L, \phi_M)}^{\hat{L}}$ denote the one cocycles corresponding to the sections $s_1$ and $s_2$, then
\begin{eqnarray*}
&& ^1Ob_{(\phi_L, \phi_M)}^{M}(x, y, z)\\
&=& \phi_{M}(\psi_1(x, y, z))-\psi_1(\phi_L(x), y, z)-\psi_1(x, \phi_L(y), z)-\psi_1(x, y, \phi_L(z))\\
&=& \phi_{M}(\psi_2(x, y, z))- \phi_{M}(u(x, y, z))-\psi_2(\phi_L(x), y, z)+u(\phi_L(x), y, z)\\
&&-\psi_2(x, \phi_L(y), z)+u(x, \phi_L(y), z)-\psi_2(x, y, \phi_L(z))+u(x, y, \phi_L(z))\\
&=&  ^2Ob_{(\phi_L, \phi_M)}^{M}(x, y, z)+d(\phi_M\circ u-u\circ \phi_L)(x, y, z).
\end{eqnarray*}
This shows that the 2-cocycles $^1Ob_{(\phi_L, \phi_M)}^{\hat{L}}$ and $^2Ob_{(\phi_L, \phi_M)}^{\hat{L}}$ are cohomologous. Hence they correspond
to the same cohomology class in $\in H^2(L, M)$.

The cohomology class $[Ob_{(\phi_L, \phi_M)}^{\hat{L}} ] \in H^2(L, M)$ is called the obstruction class to extend the pair of
derivations $(\phi_L, \phi_M)$.
\begin{theorem}
 Let Eq.(4.1) be a central extension of 3-Lie algebras. A pair of derivations $(\phi_L, \phi_M)\in Der(L)\times Der(M)$ is extensible if and only if the obstruction class $[Ob_{(\phi_L, \phi_M)}^{\hat{L}} ] \in H^2(L, M)$ is trivial.
\end{theorem}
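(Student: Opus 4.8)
The plan is to establish both directions of the equivalence by connecting the existence of an extending derivation $\phi_{\hat{L}}$ to the vanishing of the obstruction cocycle $Ob_{(\phi_L, \phi_M)}^{\hat{L}}$. Throughout I will use the identification $\hat{L} \cong L \oplus M$ coming from a fixed section $s$, so that the $3$-Lie bracket on $\hat{L}$ is encoded by the $2$-cocycle $\psi$, and I will write a candidate derivation on $L \oplus M$ in the general form $\phi_{\hat{L}}(x,m) = (\phi_L(x), \phi_M(m) + \theta(x))$ for some linear map $\theta : L \to M$. The point is that $\phi_{\hat{L}}$ restricts to $\phi_M$ on $M$ and descends to $\phi_L$ on $L$ precisely because of this shape; the only freedom is the choice of $\theta$, and it is this $\theta$ that will carry the cohomological content.

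For the \emph{if} direction, suppose $[Ob_{(\phi_L,\phi_M)}^{\hat L}] = 0$ in $H^2(L,M)$. Then there is a $1$-cochain $\theta \in C^1(L,M) = \mathrm{Hom}(L,M)$ with $d\theta = Ob_{(\phi_L,\phi_M)}^{\hat L}$, where $d$ is the coboundary for the trivial representation given explicitly in Section 2 by $d^1(\theta)(a,b,c) = -\theta([a,b,c])$ on a trivial module. First I would substitute the candidate $\phi_{\hat{L}}(x,m) = (\phi_L(x), \phi_M(m) + \theta(x))$ into the derivation condition for the bracket $[\cdot,\cdot,\cdot]_\psi$ on $L \oplus M$. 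Expanding both sides and using that $\phi_L$ is already a derivation of $L$ and $\phi_M$ intertwines the (trivial) action, the derivation identity collapses, in its $M$-component, to exactly the equation
\begin{eqnarray*}
\phi_M(\psi(x,y,z)) - \psi(\phi_L(x),y,z) - \psi(x,\phi_L(y),z) - \psi(x,y,\phi_L(z)) = \theta([x,y,z]) - 0,
\end{eqnarray*}
that is, $Ob_{(\phi_L,\phi_M)}^{\hat L}(x,y,z) = -d\theta(x,y,z)$ after matching signs with the trivial-representation coboundary $d^1\theta = -\theta\circ[\cdot,\cdot,\cdot]$. Hence the chosen $\theta$ makes $\phi_{\hat L}$ a genuine derivation, so $(\phi_L,\phi_M)$ is extensible.

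For the \emph{only if} direction, suppose an extending derivation $\phi_{\hat L}$ exists. Transferred to $L \oplus M$ through the section $s$, it must restrict to $\phi_M$ on $M$ and cover $\phi_L$, so it has the form above with $\theta(x) = \phi_{\hat L}(s(x)) - s(\phi_L(x)) = \chi(x)$. Running the derivation identity in reverse — that is, imposing that this $\phi_{\hat L}$ really is a derivation — forces the same $M$-component equation, which now reads $Ob_{(\phi_L,\phi_M)}^{\hat L} = d\chi$ (up to the sign convention), exhibiting the obstruction cocycle as a coboundary and hence $[Ob_{(\phi_L,\phi_M)}^{\hat L}] = 0$. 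The main obstacle I anticipate is purely bookkeeping rather than conceptual: carefully expanding the $3$-Lie derivation identity for the semidirect-type bracket $[\cdot,\cdot,\cdot]_\psi$ and tracking the skew-symmetry together with the correct signs so that the leftover terms assemble exactly into $Ob_{(\phi_L,\phi_M)}^{\hat L}$ and into the trivial-representation coboundary $d\theta$, with the signs matching the definition of $\partial$ and $\delta$ from Section 2. Once the computation in the \emph{if} direction is done, the \emph{only if} direction is the same computation read backward, and the independence of the choice of section is already handled by the preceding proposition, so no additional argument is needed there.
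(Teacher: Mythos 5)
Your proposal is correct and follows essentially the same route as the paper: the paper also uses the section to write the candidate extension as $\phi_{\hat L}(s(x)+a)=s(\phi_L(x))+\lambda(x)+\phi_M(a)$ (your $\theta$ is its $\lambda$, i.e.\ the map $\chi$) and reads the $M$-component of the derivation identity as the single equation $Ob^{\hat L}_{(\phi_L,\phi_M)}=\partial\lambda$, run forward for one implication and backward for the other, with section-independence delegated to the preceding proposition. One small sign slip: the derivation identity yields $Ob^{\hat L}_{(\phi_L,\phi_M)}(x,y,z)=-\theta([x,y,z])=d\theta(x,y,z)$ rather than $+\theta([x,y,z])$, but this is harmless since replacing $\theta$ by $-\theta$ still exhibits the obstruction as a coboundary.
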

{\bf Proof.}  Suppose there exists a derivations $\phi_{\hat{L}}\in Der(\hat{L})$ such that Eq. (4.2) is an exact sequence of 3-LieDer pairs. For any $x \in L$, we observe that $p(\phi_{\hat{L}} (s(x)) - s(\phi_{L} (x))) = 0$. Hence $\phi_{\hat{L}} (s(x))- s(\phi_{L} (x))\in
ker(p) = im(i)$. We define $\lambda: L\rightarrow M$ by
\begin{eqnarray*}
\lambda(x)=\phi_{\hat{L}} (s(x))- s(\phi_{L} (x)).
\end{eqnarray*}
For any $s(x) + a \in  \hat{L}$, we have
\begin{eqnarray*}
 \phi_{\hat{L}} (s(x) + a)=s(\phi_{L} (x))+\lambda(x)+\phi_{\hat{L}} (a).
\end{eqnarray*}
Since $\phi_{\hat{L}}$ is a derivation, for any $s(x) + a, s(y) + b \in \hat{L}$, we have
\begin{eqnarray*}
\phi_M (\psi(x, y, z)) - \psi(\phi_L (x), y, z) - \psi(x,\phi_L (y), z)- \psi(x, y, \phi_L (z))= -\lambda([x, y, z]),
\end{eqnarray*}
or,  equivalently,  $Ob_{(\phi_L, \phi_M)}^{\hat{L}}=\partial \lambda$  is a coboundary. Hence the obstruction class $[Ob_{(\phi_L, \phi_M)}^{\hat{L}} ] \in H^2(L, M)$ is trivial.

To prove the converse part, suppose $Ob_{(\phi_L, \phi_M)}^{\hat{L}}$
is given by a coboundary, say $Ob_{(\phi_L, \phi_M)}^{\hat{L}}=\partial \lambda$. We
define a map $\phi_{\hat{L}}: \hat{L}\rightarrow \hat{L}$ by
\begin{eqnarray*}
 \phi_{\hat{L}} (s(x) + a)=s(\phi_{L} (x))+\lambda(x)+\phi_{\hat{L}} (a).
\end{eqnarray*}
Then $\phi_{\hat{L}}$ is a derivation on $\hat{L}$ and Eq. (4.2) is an exact sequence of 3-LieDer pairs. Hence the pair $(\phi_L, \phi_M)$
is extensible.
Thus, we obtain the following.
\begin{theorem}
 If $H^2(L, M)=0$, then any pair of derivations $(\phi_L, \phi_M)\in Der(L)\times Der(M)$ is extensible.
\end{theorem}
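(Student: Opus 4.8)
The plan is to derive this statement as an immediate consequence of Theorem 4.2, which already provides the full cohomological criterion: a pair $(\phi_L, \phi_M) \in Der(L) \times Der(M)$ is extensible if and only if the obstruction class $[Ob_{(\phi_L, \phi_M)}^{\hat{L}}] \in H^2(L, M)$ is trivial. Since all the substantive work has been done in constructing the obstruction cocycle and proving that criterion, the present statement is simply the degenerate case in which the ambient cohomology group collapses.

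First I would fix an arbitrary central extension of $3$-Lie algebras as in Eq.(4.1), together with an arbitrary pair of derivations $(\phi_L, \phi_M) \in Der(L) \times Der(M)$. Choosing any section $s$ of $p$ and forming the associated maps $\psi$ and the cochain $Ob_{(\phi_L, \phi_M)}^{\hat{L}}$, the preceding proposition guarantees that $Ob_{(\phi_L, \phi_M)}^{\hat{L}}$ is a $2$-cocycle whose class in $H^2(L, M)$ is independent of the chosen section, so the obstruction class is well defined for the pair $(\phi_L, \phi_M)$.

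The key observation is that under the hypothesis $H^2(L, M) = 0$, the entire second cohomology group is trivial; hence every $2$-cocycle is a coboundary. In particular the obstruction cocycle $Ob_{(\phi_L, \phi_M)}^{\hat{L}}$ represents the zero class, that is, $[Ob_{(\phi_L, \phi_M)}^{\hat{L}}] = 0$ in $H^2(L, M)$. Applying the ``if'' direction of Theorem 4.2 to this vanishing class then yields a derivation $\phi_{\hat{L}} \in Der(\hat{L})$ making Eq.(4.2) exact, so $(\phi_L, \phi_M)$ is extensible; as the pair was arbitrary, every pair of derivations is extensible.

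I do not anticipate any genuine obstacle in carrying this out, precisely because the hard analysis, namely verifying that the obstruction genuinely lands in $H^2(L, M)$ and establishing the two-directional criterion, is already contained in the proposition and Theorem 4.2 preceding this statement. The only point warranting a line of care is the logical step that a vanishing cohomology \emph{group} forces each individual obstruction class to vanish for \emph{all} pairs simultaneously, which is immediate but worth stating explicitly so that the universal quantifier over $(\phi_L, \phi_M)$ in the conclusion is clearly justified.
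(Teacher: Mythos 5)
Your proposal is correct and matches the paper's approach exactly: the paper states this theorem as an immediate consequence of Theorem 4.2 (introduced with ``Thus, we obtain the following'') with no separate proof, precisely because $H^2(L,M)=0$ forces every obstruction class to be trivial. Your write-up simply makes explicit the short argument the paper leaves implicit.
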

 \section{Formal deformations of 3-LieDer pairs}
\def\theequation{\arabic{section}. \arabic{equation}}
\setcounter{equation} {0}

In this section, we study one-parameter formal deformations of 3-LieDer pairs in which we deform
both the 3-Lie bracket and the distinguished derivations.

Let $(L,  \phi_L)$ be a 3-LieDer pair.  We denote the 3-Lie  bracket on $L$ by $\mu$, i.e, $\mu(x,y, z) = [x, y, z]$, for
all $x, y, z \in L$. Consider the space $L[[t]]$ of formal power series in $t$ with coefficients from $L$. Then $L[[t]]$
is a $\mathbb{F}[[t]]$-module.

A formal one-parameter deformation of the 3-LieDer pair $(L,  \phi_L)$ consist of formal power
series
\begin{eqnarray*}
 \mu_t&=&\sum_{i=0}^{\infty}t^i\mu_i\in \mbox{Hom}(L^{\otimes 3}, L)[[t]] ~\mbox{with}~\mu_0=\mu,\\
\phi_{t}&=&\sum_{i=0}^{\infty}t^i\phi_{i}\in \mbox{Hom}(L, L)[[t]] ~\mbox{with}~\phi_{0}=\phi_L,
\end{eqnarray*}
such that $L[[t]]$ together with the bracket $\mu_t$ forms a 3-Lie algebra over $\mathbb{F}[[t]]$ and $\phi_t$ is a derivation on $L[[t]]$.

Therefore, in a formal one-parameter deformation of 3-LieDer pair, the following relations hold:
\begin{eqnarray}
\label{5.1}&&\mu_t(x, y, \mu_t(z, v, w)) = \mu_t(\mu_t(x, y, z), v, w)+ \mu_t(z, \mu_t(x, y, v), w)+\mu_t(z, v,  \mu_t(x, y, w)),~~~~~~~~\\
\label{5.2}&& \phi_{t}(\mu_t(x, y, z))=\mu_t(\phi_{t}(x),y, z)+\mu_t(x,\phi_{t}(y), z)+\mu_t(x, y, \phi_{t}(z)).
\end{eqnarray}
Conditions Eqs.(\ref{5.1})-(\ref{5.2}) are equivalent to the following equations:
\begin{eqnarray}
&&\sum_{i+j=n}\mu_i(x, y, \mu_j(z, v, w)) \\
&=& \sum_{i+j=n}\mu_i(\mu_j(x, y, z), v, w)+ \mu_i(z, \mu_j(x, y, v), w)+\mu_i(z, v,  \mu_j(x, y, w)),~~~~~~~~\nonumber\\
and,\nonumber \\ 
&& \sum_{i+j=n}\phi_{i}(\mu_j(x, y, z))\nonumber \\
&&=\sum_{i+j=n}\mu_i(\phi_{j}(x),y, z)+\mu_i(x,\phi_{j}(y), z)+\mu_i(x, y, \phi_{j}(z)).
\end{eqnarray}
For $n = 0$ we simply get $(L, \phi_L)$ is a 3-LieDer pair. For $n = 1$, we have
\begin{eqnarray}
&& \mu_1(x, y, [z, v, w])+[x, y, \mu_1(z, v, w)]\nonumber\\
&=&\mu_1([x, y, z], v, w)+[\mu_1(x, y, z), v, w]+[z, \mu_1(x, y, v), w]\nonumber\\
\label{5.5}&&+\mu_1(z, [x, y, v], w)+[z, v, \mu_1(x, y, w)]+\mu_1(z, v, [x, y, w]),~~~~~~~\\
and,\nonumber \\
&& \phi_1([x, y, z])+\phi_{L}(\mu_1(x, y, z))\nonumber\\
&=&\mu_1(\phi_{L}(x),y, z)+[\phi_{1}(x),y, z]+\mu_1(x,\phi_{L}(y), z)+[x,\phi_{1}(y), z]\nonumber\\
\label{5.6}&&+\mu_1(x, y, \phi_{L}(z))+[x, y, \phi_{1}(z)].
\end{eqnarray}
  The condition Eq.(\ref{5.5}) is equivalent to $d(\mu_1)=0$ whereas the condition Eq.(\ref{5.6}) is
equivalent to $ d(\phi_{1}) + \delta(\mu_1)= 0$. Therefore, we have
\begin{eqnarray*}
\partial(\mu_1, \phi_{1})=0.
\end{eqnarray*}
\begin{definition}
Let $(\mu_t, \phi_t)$ be a one-parameter formal deformation of $3$-LieDer pair $(L, \phi_L)$. Suppose $(\mu_n, \phi_n)$ is the first non-zero term of $(\mu_t, \phi_t)$ after $(\mu_0, \phi_0)$, then such $(\mu_n, \phi_n)$ is called the infinitesimal of the deformation of $(L, \phi_L)$.
\end{definition}
Hence, from the above observations,  we have the following proposition.
\begin{proposition}\label{inf prop}
 Let $(\mu_t, \phi_{t})$ be a formal one-parameter deformation of a 3-LieDer pair $(L, \phi_L)$. Then the linear term $(\mu_1, \phi_{1})$ is a 1-cocycle in the cohomology of the 3-LieDer pair $L$ with coefficients in itself.
\end{proposition}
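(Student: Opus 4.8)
The plan is to show that the pair $(\mu_1, \phi_1)$ satisfies $\partial(\mu_1, \phi_1) = 0$ in the cochain complex $C^{\ast}_{\text{3-LieDer}}(L, L)$ of the 3-LieDer pair with coefficients in itself, which is precisely the statement that $(\mu_1, \phi_1)$ is a 1-cocycle. Recall that the coboundary map on $1$-cochains sends $(\mu_1, \phi_1) \in C^2_{\text{3-LieDer}}(L,L)$ to the pair $\bigl(d(\mu_1),\, d(\phi_1) + (-1)^2\delta(\mu_1)\bigr)$, using the second formula in the definition of $\partial$. So it suffices to verify separately that $d(\mu_1) = 0$ and that $d(\phi_1) + \delta(\mu_1) = 0$. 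Each of these two conditions comes directly from extracting the coefficient of $t^1$ in the two defining relations Eqs.~(\ref{5.1})--(\ref{5.2}) of a formal deformation.

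First I would expand the $n=1$ component of the deformed Jacobi identity, which is exactly Eq.~(\ref{5.5}). The strategy is to collect all the terms linear in $\mu_1$ and recognize them, via the explicit formula for the differential $d$ with coefficients in $L$ (equivalently, via $d\mu_1 = (-1)^n[\mu, \mu_1]$ in the graded Lie algebra $(C^{\ast}(L,L), [\cdot, \cdot])$ of \cite{R05}), as precisely the expression $d(\mu_1)(x,y,z,v,w)$ evaluated on the self-representation. The six terms on the right-hand side of Eq.~(\ref{5.5}) together with the two on the left, after rearranging signs according to the coboundary formula $d^2$ specialized to coefficients in $L$, assemble into $d(\mu_1) = 0$. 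This is essentially the standard fact that the infinitesimal of a 3-Lie algebra deformation is a 2-cocycle for the 3-Lie algebra cohomology, which is already indicated in the text immediately after Eq.~(\ref{5.6}).

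Next I would do the same with the derivation compatibility, extracting the $n=1$ coefficient of Eq.~(\ref{5.2}), which is Eq.~(\ref{5.6}). Here the terms split into those involving $\phi_1$ paired with the undeformed bracket $\mu$ and those involving $\mu_1$ paired with the undeformed derivation $\phi_L$. The former group reorganizes into $d(\phi_1)$, where $d$ is the differential applied to the $1$-cochain $\phi_1 \in C^1(L,L)$, while the latter group, by the definition $\delta f = \sum_i f \circ (Id_L \otimes \cdots \otimes \phi_L \otimes \cdots \otimes Id_L) - \phi_M \circ f$ specialized to $\phi_M = \phi_L$, is exactly $\delta(\mu_1)$. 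Thus Eq.~(\ref{5.6}) is literally the statement $d(\phi_1) + \delta(\mu_1) = 0$, as the text notes.

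The main obstacle is bookkeeping rather than conceptual: one must match signs and term-orderings between the raw coefficient-extraction from Eqs.~(\ref{5.5})--(\ref{5.6}) and the formal definitions of $d$ and $\delta$, and in particular confirm that the sign $(-1)^n = (-1)^2 = +1$ appearing in $\partial$ is consistent so that the two conditions combine into $\partial(\mu_1, \phi_1) = 0$ with no spurious sign discrepancy. Once both identifications $d(\mu_1) = 0$ and $d(\phi_1) + \delta(\mu_1) = 0$ are in hand, the conclusion $\partial(\mu_1, \phi_1) = 0$ is immediate, establishing that $(\mu_1, \phi_1)$ is a 1-cocycle.
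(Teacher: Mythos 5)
Your proposal is correct and takes essentially the same approach as the paper: the paper likewise extracts the coefficient of $t$ from the deformation equations to obtain Eqs.~(5.5)--(5.6), identifies these with $d(\mu_1)=0$ and $d(\phi_1)+\delta(\mu_1)=0$, and concludes $\partial(\mu_1,\phi_1)=0$. The only extra content in the paper's proof is the remark that, if $(\mu_1,\phi_1)$ vanishes, the identical argument shows the first non-zero term $(\mu_n,\phi_n)$ is also a cocycle.
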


\begin{proof}
We have showed that $$\partial(\mu_1, \phi_{1})=0.$$
If $(\mu_1, \phi_{1})$ be the first non-zero term, then we are done. If $(\mu_n, \phi_{n})$ be the first non-zero term after $(\mu_0, \phi_0)$, then exactly the same way, one can show that 
$$\partial(\mu_n, \phi_{n})=0.$$
\end{proof}

Next, we define a notion of equivalence between formal deformations of 3-LieDer pairs.
\begin{definition}
Two deformations  $(\mu_t, \phi_{t})$ and $(\mu'_t,  \phi'_{t})$ of a 3-LieDer pair $(L,  \phi_L)$  are said to be equivalent if there exists a formal isomorphism $\Phi_t=\sum_{i=0}^{\infty}t^{i}\phi_i: L[[t]]\rightarrow L[[t]]$ with $\Phi_0=Id_L$ such that
\begin{eqnarray*}
\Phi_t \circ \mu_t=\mu'_t\circ (\Phi_t\o \Phi_t\o \Phi_t), ~~~~~\Phi_t \circ \phi_t=\phi'_t \circ \Phi_t.
\end{eqnarray*}
\end{definition}
 By comparing coefficients of $t^n$ from both the sides, we have
 \begin{eqnarray*}
&& \sum_{i+j=n}  \phi_i \circ \mu_j=\sum_{p+q+r+l=n}\mu'_p\circ (\phi_q\o \phi_r\o \phi_l),\\
&&\sum_{i+j=n} \phi'_{i}\circ \phi_j=\sum_{p+q=n}\phi_p\circ \phi_{q}.
 \end{eqnarray*}
Easy to see that the above identities hold for $n = 0$. For $n = 1$, we get
 \begin{eqnarray}
&&\mu_1+\phi_1\circ \mu=\mu'_1+\mu\circ (\phi_1\o Id\o Id)+\mu\circ (Id\o  Id \o\phi_1),\\
&& \phi_L \circ \Phi_1+\phi'_{1}=\phi_{1}+\phi_1 \circ \phi_{L}.
 \end{eqnarray}
 These two identities together imply that
 \begin{eqnarray*}
 (\mu_1, \phi_{1})-(\mu'_1,  \phi'_{1})=\partial \phi_1.
 \end{eqnarray*}
 Thus, we have the following.
 \begin{proposition}
 The infinitesimals corresponding to equivalent deformations of the 3-LieDer pair $(L, \phi_L)$ are cohomologous.
 \end{proposition}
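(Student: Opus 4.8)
The plan is to read the claim directly off the order-$t$ comparison of the two equivalence identities, recognizing the resulting expressions as the two components of the coboundary operator $\partial$ acting on a single $1$-cochain, namely the linear coefficient of the intertwining isomorphism.

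First I would take the two defining relations of an equivalence, $\Phi_t\circ\mu_t=\mu'_t\circ(\Phi_t\otimes\Phi_t\otimes\Phi_t)$ and $\Phi_t\circ\phi_t=\phi'_t\circ\Phi_t$, expand both sides as formal power series in $t$, and extract the coefficient of $t^1$ using $\Phi_0=Id_L$, $\mu_0=\mu$, and $\phi_0=\phi'_0=\phi_L$. This is exactly the computation already carried out in the lead-up to the statement, and it produces the two operator identities displayed there. Rearranging them isolates $(\mu_1-\mu'_1)$ and $(\phi_1-\phi'_1)$ on one side, with the linear coefficient $\phi_1$ of $\Phi_t$ appearing on the other.

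Second, and this is the \emph{conceptual} heart, I would identify each rearranged right-hand side with a component of $\partial$. Evaluating the bracket identity on $(a,b,c)$ gives $(\mu_1-\mu'_1)(a,b,c)=[\phi_1(a),b,c]+[a,\phi_1(b),c]+[a,b,\phi_1(c)]-\phi_1([a,b,c])$, which is precisely $d^1\phi_1$ for the self-coefficient case $M=L$. For the derivation identity I would use that on a $1$-cochain $f\in\mathrm{Hom}(L,L)$ the map $\delta$ collapses to $\delta f=f\circ\phi_L-\phi_L\circ f$, so that the rearrangement $\phi_1-\phi'_1=\phi_L\circ\phi_1-\phi_1\circ\phi_L$ is exactly $-\delta\phi_1$.

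Finally, invoking the definition $\partial f=(df,-\delta f)$ for $f\in C^1_{\text{3-LieDer}}(L,L)$, the two identities combine into the single equation $(\mu_1,\phi_1)-(\mu'_1,\phi'_1)=\partial\phi_1$, exhibiting the difference of the two infinitesimals as a coboundary; hence the $1$-cocycles $(\mu_1,\phi_1)$ and $(\mu'_1,\phi'_1)$ are cohomologous. The same argument applies verbatim at the first non-vanishing order $n$ when the infinitesimal is not the linear term, with $\phi_1$ replaced by the order-$n$ coefficient of $\Phi_t$. The main obstacle here is bookkeeping rather than depth: one must keep the overloaded symbol $\phi_i$ (a coefficient of the deformed derivation versus a coefficient of the intertwining isomorphism) carefully apart, and verify that the three slot-substitution terms coming from the bracket relation assemble exactly into the self-coefficient differential $d^1$ with the correct signs, and that $-\delta$ reproduces the commutator $\phi_L\circ\phi_1-\phi_1\circ\phi_1$ with the sign convention used to define $\partial$.
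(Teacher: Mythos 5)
Your proposal is correct and follows exactly the paper's own route: expand the two equivalence identities, extract the coefficient of $t^1$, and recognize the resulting pair of identities as the two components of $\partial\Phi_1$ for the linear coefficient $\Phi_1$ of the intertwining isomorphism, giving $(\mu_1,\phi_1)-(\mu'_1,\phi'_1)=\partial\Phi_1$. Your added care in separating the overloaded symbol $\phi_1$ (coefficient of $\phi_t$ versus coefficient of $\Phi_t$) and in verifying $\delta f=f\circ\phi_L-\phi_L\circ f$ on $1$-cochains only makes explicit what the paper leaves implicit; there is no substantive difference.
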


 \begin{definition}
  A deformation $(\mu_t,  \phi_{t})$ of a 3-LieDer pair is said to be trivial if it is
equivalent to the undeformed deformation $(\mu'_t=\mu,  \phi'_{t}=\phi_L)$.
 \end{definition}
  \begin{definition}
   A 3-LieDer pair $(L,\phi_L)$ is called rigid, if every 1-parameter
formal deformation $\mu_t$ is equivalent to the trivial deformation.
  \end{definition}
 \begin{theorem}
Every formal deformation of the 3-LieDer pair $(L,  \phi_L)$ is rigid if the second cohomology group of the 3-LieDer pair vanishes, that is, $H^{2}_{\text{3-LieDer}} (L, L) = 0$.
 \end{theorem}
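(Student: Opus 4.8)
The plan is to read the theorem as the statement that the hypothesis $H^{2}_{\text{3-LieDer}}(L, L) = 0$ forces the $3$-LieDer pair $(L,\phi_L)$ to be rigid, i.e.\ every formal deformation $(\mu_t,\phi_t)$ is equivalent to the trivial deformation $(\mu,\phi_L)$. I would prove this by an inductive argument that removes the lowest-order nontrivial term of the deformation one degree at a time. First I would assume $(\mu_t,\phi_t)$ is not already trivial and let $n\geq 1$ be the smallest index for which $(\mu_n,\phi_n)\neq(0,0)$, so that $\mu_t=\mu+t^n\mu_n+O(t^{n+1})$ and $\phi_t=\phi_L+t^n\phi_n+O(t^{n+1})$. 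Since all intermediate terms vanish, the deformation equations at order $n$ collapse to the pure cocycle condition, giving $\partial(\mu_n,\phi_n)=0$; this is exactly the computation already performed in Proposition \ref{inf prop}, applied now to the first nonzero term rather than to $(\mu_1,\phi_1)$.

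Because $(\mu_n,\phi_n)$ is a $2$-cocycle and $H^2_{\text{3-LieDer}}(L, L)=0$, there is a $1$-cochain $\psi\in C^1_{\text{3-LieDer}}(L,L)=\mathrm{Hom}(L,L)$ with $(\mu_n,\phi_n)=\partial\psi=(d\psi,\,-\delta\psi)$. I would then set $\Phi_t=\mathrm{Id}_L+t^n\psi:L[[t]]\to L[[t]]$, a formal isomorphism with $\Phi_0=\mathrm{Id}_L$, invertible since it is the identity plus higher-order terms with $\Phi_t^{-1}=\mathrm{Id}_L-t^n\psi+O(t^{2n})$. Transporting $(\mu_t,\phi_t)$ along $\Phi_t$ yields an equivalent deformation $(\mu'_t,\phi'_t)$, and the key check is to compare coefficients of $t^n$ in the two equivalence conditions $\Phi_t\circ\mu_t=\mu'_t\circ(\Phi_t\otimes\Phi_t\otimes\Phi_t)$ and $\Phi_t\circ\phi_t=\phi'_t\circ\Phi_t$, exactly as in the order-$n$ equivalence identities derived just above. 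This gives $(\mu_n,\phi_n)-(\mu'_n,\phi'_n)=\partial\psi$, hence $\mu'_n=\mu_n-d\psi=0$ and $\phi'_n=\phi_n+\delta\psi=0$. Thus $(\mu'_t,\phi'_t)=(\mu,\phi_L)+O(t^{n+1})$: the new deformation agrees with the trivial one to one higher order.

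Iterating produces a sequence of equivalences $\Phi_t^{(n)},\Phi_t^{(n+1)},\dots$ with $\Phi_t^{(k)}=\mathrm{Id}_L+t^k\psi_k$, each successively annihilating the term of order $k$. The composite $\cdots\circ\Phi_t^{(n+1)}\circ\Phi_t^{(n)}$ converges in the $t$-adic topology on $\mathrm{Hom}(L,L)[[t]]$, since each new factor differs from the identity only in orders $\geq k$ with $k$ increasing, so each coefficient of the composite stabilizes after finitely many steps; the limiting formal isomorphism carries $(\mu_t,\phi_t)$ to $(\mu,\phi_L)$, establishing rigidity. I expect the main obstacle to be the bookkeeping of the middle step: verifying, with the signs fixed by $\partial\psi=(d\psi,-\delta\psi)$, that transport by $\mathrm{Id}_L+t^n\psi$ cancels the order-$n$ term of \emph{both} the bracket and the derivation against the coboundary. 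A secondary technical point is justifying the $t$-adic convergence of the infinite composition of equivalences, which is what legitimately upgrades ``equivalent modulo $t^{n+1}$ for all $n$'' into ``equivalent to the trivial deformation.''
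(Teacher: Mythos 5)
Your proposal is correct and follows essentially the same argument as the paper: use the cocycle condition on the lowest-order nonvanishing term, apply $H^{2}_{\text{3-LieDer}}(L,L)=0$ to write it as a coboundary $\partial\psi$, gauge away that term by the formal isomorphism $\mathrm{Id}_L+t^{n}\psi$, and iterate. The only difference is that you spell out two points the paper compresses into ``by repeating this argument'' — working directly with the first nonzero term $(\mu_n,\phi_n)$ rather than $(\mu_1,\phi_1)$, and verifying $t$-adic convergence of the infinite composite of equivalences — which strengthens rather than changes the proof.
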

 {\bf Proof.}  Let $(\mu_t,  \phi_{t})$ be a deformation of the 3-LieDer pair $(L,  \phi_L)$. From the Proposition \ref{inf prop}, the linear term $(\mu_1,  \phi_{1})$ is a 2-cocycle. Therefore, $(\mu_1,  \phi_{1})=\partial \Phi_1$  for some $\phi_1 \in  C^1_{\text{3-LieDer}}(L, L) = \text{Hom}(L, L)$.

We set $\Phi_t = Id_L + t\Phi_1: L[[t]]\rightarrow  L[[t]]$ and define
\begin{eqnarray}
\mu'_t=\Phi_t^{-1}\circ \mu_t\circ (\Phi_t\o \Phi_t\o \Phi_t),~~~~\phi'_{t}=\Phi_t^{-1}\circ \phi_{t}\circ \Phi_t.
\end{eqnarray}
By definition,  $(\mu'_t,  \phi'_{t})$   is equivalent to $(\mu_t,  \phi_{t})$.  Moreover, it follows from Eq.(5.7)
that
\begin{eqnarray*}
&& \mu'_t=\mu+t^2\mu'_2+\c \c \c ~~~~~~\mbox{and}~~~ \phi'_{t}=\phi_L+t^{2}\phi'_{2}+\c \c \c.
\end{eqnarray*}
In other words, the linear terms are vanish. By repeating this argument, we get $(\mu_t, \phi_t)$ is equivalent to $(\mu, \phi_L)$.    \hfill $\square$

Next, we consider finite order deformations of a 3-LieDer pair $(L, \phi_L)$, and show that how obstructions of extending a deformation of order $N$ to a deformation of order $(N+1)$  depends on the third cohomology class of the $3$-LieDer pair $(L, \phi_L)$ .
% Given a deformation of order $N$, we associate a third
%cohomology class in the cohomology of the 3-LieDer pair $(L, \phi_L)$ with coefficients in itself. When this
%cohomology class is trivial, the order $N$ deformation extends to a deformation of order $N + 1$.
\begin{definition}
 A deformation of order $N$ of a 3-LieDer pair $(L, \phi_L)$ consist of finite sums $\mu_t = \sum_{i=0}^N t^i\mu_i$ and $\phi_t =\sum_{i=0}^N t^i\phi_i$
such that $\mu_t$ defines 3-Lie bracket on $L[[t]]/(t^{N+1})$ and $\phi_t$ is a derivation on it.
\end{definition}

Therefore, we have
\begin{eqnarray*}
&&\sum_{i+j=n}\mu_i(x, y, \mu_j(z, v, w)) \\
&=& \sum_{i+j=n}\mu_i(\mu_j(x, y, z), v, w)+ \mu_i(z, \mu_j(x, y, v), w)+\mu_i(z, v,  \mu_j(x, y, w)),~~~~~~~~\\
&and,\\
&& \sum_{i+j=n}\phi_{i}(\mu_j(x, y, z))=\sum_{i+j=n}\mu_i(\phi_{j}(x),y, z)+\mu_i(x,\phi_{j}(y), z)+\mu_i(x, y, \phi_{j}(z)),
\end{eqnarray*}
for $n = 0, 1,\ldots, N$. These identities are equivalent to
\begin{eqnarray}
&& [\mu, \mu_n]=-\frac{1}{2}\sum_{i+j=n, i, j> 0}[\mu_i, \mu_j], \\
&& -[\phi_L, \mu_n]+[\mu, \phi_n]=\sum_{i+j=n, i, j> 0}[\phi_i, \mu_j].
\end{eqnarray}
\begin{definition}
A deformation ($\mu_t = \sum_{i=0}^N t^i\mu_i, \phi_t =\sum_{i=0}^N t^i\phi_i$) of order $N$ is said to be extendable
if there is an element $(\mu_{N+1}, \phi_{N+1} )\in  C^2_{\text{3-LieDer}} (L, L)$ such that $(\mu'_t=\mu_t+t^{N+1}\mu_{N+1}, \phi'_t=\phi_t+t^{N+1}\phi_{N+1})$ is a deformation of order $N + 1$.
\end{definition}

Thus, the following two equations need to be satisfied-
\begin{eqnarray}
&&\sum_{i+j=N+1}\mu_i(x, y, \mu_j(z, v, w)) \nonumber \\
\label{eqn 5.12}&=& \sum_{i+j=N+1}\mu_i(\mu_j(x, y, z), v, w)+ \mu_i(z, \mu_j(x, y, v), w)+\mu_i(z, v,  \mu_j(x, y, w)),~~~~~~~~\\
&and,  \nonumber \\
&& \sum_{i+j=N+1}\phi_{i}(\mu_j(x, y, z))  \nonumber \\
\label{eqn 5.13}&&=\sum_{i+j=N+1}\mu_i(\phi_{j}(x),y, z)+\mu_i(x,\phi_{j}(y), z)+\mu_i(x, y, \phi_{j}(z)).
\end{eqnarray}
The above two equations can be equivalently written as
\begin{eqnarray}
\label{eqn 5.14}&& d(\mu_{N+1})=-\frac{1}{2}\sum_{i+j=N+1, i, j> 0}[\mu_i, \mu_j]=Ob^3\\
\label{eqn 5.15}&& d(\phi_{N+1})+\delta(\mu_{N+1})=-\sum_{i+j=N+1, i, j> 0}[\phi_i, \mu_j]=Ob^2.
\end{eqnarray}
Using the Equation \ref{eqn 5.14} and \ref{eqn 5.15}, it is a routine but lengthy work to prove the following proposition. Thus, we choose to omit the proof.
\begin{proposition}
 The pair $(Ob^3, Ob^2 ) \in C^3_{\text{3-LieDer}} (L, L)$ is a 3-cocycle in the cohomology of the 3-LieDer pair $(L, \phi_L )$ with coefficients in itself.
\end{proposition}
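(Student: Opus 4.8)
The plan is to carry out the entire computation inside the graded Lie algebra $(C^{\ast}(L,L),[\c,\c])$ of \cite{R05}, using the two dictionary formulas already recorded in the excerpt: $df=(-1)^n[\mu,f]$ for $f\in C^n(L,L)$, and $\delta f=-[\phi_L,f]$ in the self-representation case (this is exactly the identity used in the Lemma $d\circ\delta=\delta\circ d$). Unwinding the definition of $\partial$ on the $3$-cochain $(Ob^3,Ob^2)\in C^3(L,L)\oplus C^2(L,L)$ gives
\[
\partial(Ob^3,Ob^2)=\bigl(d\,Ob^3,\; d\,Ob^2+(-1)^3\delta\,Ob^3\bigr)=\bigl(-[\mu,Ob^3],\;[\mu,Ob^2]+[\phi_L,Ob^3]\bigr).
\]
Hence the proposition is equivalent to the two bracket identities $[\mu,Ob^3]=0$ and $[\mu,Ob^2]+[\phi_L,Ob^3]=0$, and I would establish them in turn.

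It is convenient to set $\overline{\mu}=\sum_{i\geq 1}t^i\mu_i$ and $\overline{\phi}=\sum_{i\geq 1}t^i\phi_i$, so that $\mu_t=\mu+\overline{\mu}$ and $\phi_t=\phi_L+\overline{\phi}$. In this packaging the order-$N$ deformation conditions (those holding for $n=0,\dots,N$) are precisely $[\mu_t,\mu_t]\equiv 0$ and $[\phi_t,\mu_t]\equiv 0 \pmod{t^{N+1}}$; using $[\mu,\mu]=0$ and $[\phi_L,\mu]=0$ these read
\[
2[\mu,\overline{\mu}]+[\overline{\mu},\overline{\mu}]\equiv 0,\qquad [\phi_L,\overline{\mu}]+[\overline{\phi},\mu]+[\overline{\phi},\overline{\mu}]\equiv 0 \pmod{t^{N+1}}.
\]
By their definitions, $Ob^3$ is $-\tfrac12$ times the coefficient of $t^{N+1}$ in $[\overline{\mu},\overline{\mu}]$, and $Ob^2$ is minus the coefficient of $t^{N+1}$ in $[\overline{\phi},\overline{\mu}]$. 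For the first identity I would apply $[\mu,-]$, use the graded Jacobi identity in the form $[\mu,[\overline{\mu},\overline{\mu}]]=2[[\mu,\overline{\mu}],\overline{\mu}]$, substitute $[\mu,\overline{\mu}]\equiv -\tfrac12[\overline{\mu},\overline{\mu}]$, and invoke the standard identity $[[\overline{\mu},\overline{\mu}],\overline{\mu}]=0$ (graded Jacobi for a single degree-one element). Reading off the coefficient of $t^{N+1}$ then gives $[\mu,Ob^3]=0$; this is the classical Gerstenhaber argument that the primary obstruction is a cocycle.

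For the second identity I would again expand by graded Jacobi, obtaining $[\mu,[\overline{\phi},\overline{\mu}]]=[[\mu,\overline{\phi}],\overline{\mu}]+[\overline{\phi},[\mu,\overline{\mu}]]$ and $\tfrac12[\phi_L,[\overline{\mu},\overline{\mu}]]=[[\phi_L,\overline{\mu}],\overline{\mu}]$. I would then eliminate the two quantities $[\mu,\overline{\mu}]$ and $[\mu,\overline{\phi}]$ via the lower-order relations above — the former from the bracket deformation, the latter from $[\phi_t,\mu_t]\equiv 0$ — and collect terms. With the auxiliary identities $[\overline{\phi},[\overline{\mu},\overline{\mu}]]=2[[\overline{\phi},\overline{\mu}],\overline{\mu}]$ and $[\overline{\mu},[\phi_L,\overline{\mu}]]=[[\phi_L,\overline{\mu}],\overline{\mu}]$, the mixed triple brackets are designed to cancel in pairs, and reading off the coefficient of $t^{N+1}$ yields $[\mu,Ob^2]+[\phi_L,Ob^3]=0$.

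I expect the main obstacle to be exactly this last cancellation, that is, the sign bookkeeping among the cross terms built from $\phi_L$, $\overline{\phi}$ and two copies of $\overline{\mu}$. These are the terms whose vanishing is not automatic: whether they cancel or instead accumulate depends on the relative signs in the dictionary $df=(-1)^n[\mu,f]$, $\delta f=-[\phi_L,f]$ and in the defining formulas for $Ob^2$ and $Ob^3$ being mutually consistent — the same consistency that underlies the Lemma $d\circ\delta=\delta\circ d$. Once the conventions are pinned down as in the preceding sections, every cancellation is forced by the graded Jacobi identity and the graded antisymmetry of $[\c,\c]$ together with the degree-$N$ deformation relations, with no further input; this is precisely why the verification, while routine, is lengthy enough to omit in full.
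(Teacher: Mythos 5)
First, a point of comparison: the paper does not actually prove this proposition --- it declares the verification ``routine but lengthy'' and omits it --- so your graded-Lie-algebra strategy is supplying content the paper lacks, and it is the natural strategy. Your reduction of the statement to the two bracket identities $[\mu,Ob^3]=0$ and $[\mu,Ob^2]+[\phi_L,Ob^3]=0$ is a faithful unwinding of the paper's stated conventions ($df=(-1)^n[\mu,f]$, $\delta f=-[\phi_L,f]$, $\partial(f_n,\overline{f}_n)=(df_n,d\overline{f}_n+(-1)^n\delta f_n)$), the generating-series packaging of the order-$N$ equations is correct, all of your auxiliary Jacobi identities check out, and your proof of the first identity $[\mu,Ob^3]=0$ is the correct classical Gerstenhaber argument.

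The gap is in the second identity, and it sits exactly at the point you flagged as the risk: with the conventions you adopted, the cross terms do not cancel --- they accumulate. Run your own substitutions to the end: from $[\mu,[\overline{\phi},\overline{\mu}]]=[[\mu,\overline{\phi}],\overline{\mu}]+[\overline{\phi},[\mu,\overline{\mu}]]$, inserting $[\mu,\overline{\phi}]\equiv[\phi_L,\overline{\mu}]+[\overline{\phi},\overline{\mu}]$ and $[\mu,\overline{\mu}]\equiv-\tfrac12[\overline{\mu},\overline{\mu}]$ and using $[\overline{\phi},[\overline{\mu},\overline{\mu}]]=2[[\overline{\phi},\overline{\mu}],\overline{\mu}]$, the two copies of $[[\overline{\phi},\overline{\mu}],\overline{\mu}]$ cancel and you are left with
\[
[\mu,[\overline{\phi},\overline{\mu}]]\;\equiv\;[[\phi_L,\overline{\mu}],\overline{\mu}]\;\equiv\;\tfrac12\,[\phi_L,[\overline{\mu},\overline{\mu}]] \pmod{t^{N+2}}.
\]
Reading off the coefficient of $t^{N+1}$ gives $[\mu,Ob^2]=+[\phi_L,Ob^3]$, not $-[\phi_L,Ob^3]$; hence the second component of $\partial(Ob^3,Ob^2)$ equals $2[\phi_L,Ob^3]$, which has no reason to vanish (already for $N=1$ it is $-2[[\mu,\phi_1],\mu_1]$ up to sign). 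The source of the trouble is not your algebra but an internal inconsistency in the paper that you assumed away: under the dictionary $\delta f=-[\phi_L,f]$ of Section 2, the bracket form of the order-$(N+1)$ derivation equation is $d\phi_{N+1}-\delta\mu_{N+1}=Ob^2$, whereas Eq.\ (5.15) asserts $d\phi_{N+1}+\delta\mu_{N+1}=Ob^2$; these two sign choices, together with the $+(-1)^n\delta$ in $\partial$, cannot all hold. What your method actually proves is that $(Ob^3,-Ob^2)$ is a cocycle for the paper's $\partial$, equivalently that $(Ob^3,Ob^2)$ is a cocycle for the corrected differential $\partial(f_n,\overline{f}_n)=(df_n,d\overline{f}_n-(-1)^n\delta f_n)$ (or, equivalently, with $\delta f=+[\phi_L,f]$) --- and only with that one-sign correction does the extendability theorem $(Ob^3,Ob^2)=\partial(\mu_{N+1},\phi_{N+1})$ become consistent as well. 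So the single step that fails is your closing assertion that ``the conventions pinned down in the preceding sections'' force the cancellation; they force the opposite sign, and the proposition as stated requires emending the paper's complex, not just computing within it.
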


\begin{definition}
 Let $(\mu_t, \phi_t)$  be a deformation of order $N$ of a 3-LieDer pair $(L, \phi_L)$. The cohomology class $[(Ob^3, Ob^2 )]\in  H^3_{\text{3-LieDer}} (L, L)$  is called the obstruction class of $(\mu_t, \phi_t)$.
\end{definition}

\begin{theorem}
A deformation $(\mu_t, \phi_t )$ of order $N$ is extendable if and only if the obstruction class
$[(Ob^3, Ob^2 )]\in  H^3_{\text{3-LieDer}} (L, L)$ is trivial.
\end{theorem}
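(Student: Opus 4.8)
The plan is to observe that the entire statement is a formal consequence of a single cochain-level identity, namely $\partial(\mu_{N+1},\phi_{N+1}) = (Ob^3, Ob^2)$, after which extendability becomes synonymous with $(Ob^3, Ob^2)$ lying in the image of $\partial$. First I would unwind the coboundary operator on $C^2_{\text{3-LieDer}}(L,L) = C^2(L,L)\oplus C^1(L,L)$: since $\mu_{N+1}\in C^2(L,L)$ and $\phi_{N+1}\in C^1(L,L)$, substituting $n=2$ into the defining formula $\partial(f_n,\overline{f}_n) = (df_n,\, d\overline{f}_n + (-1)^n\delta f_n)$ yields $\partial(\mu_{N+1},\phi_{N+1}) = (d\mu_{N+1},\, d\phi_{N+1}+\delta\mu_{N+1})$. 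Reading off \eqref{eqn 5.14}--\eqref{eqn 5.15}, the order-$(N+1)$ deformation equations say exactly $d\mu_{N+1}=Ob^3$ and $d\phi_{N+1}+\delta\mu_{N+1}=Ob^2$, i.e. $\partial(\mu_{N+1},\phi_{N+1})=(Ob^3,Ob^2)$.

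With this identification in hand, both implications are immediate. For the forward direction, if the order-$N$ deformation is extendable then by the definition of extendability there is a pair $(\mu_{N+1},\phi_{N+1})\in C^2_{\text{3-LieDer}}(L,L)$ making $(\mu_t+t^{N+1}\mu_{N+1},\,\phi_t+t^{N+1}\phi_{N+1})$ a deformation of order $N+1$; such a pair solves the two equations above, so $(Ob^3,Ob^2)=\partial(\mu_{N+1},\phi_{N+1})$ is a coboundary and hence $[(Ob^3,Ob^2)]=0$ in $H^3_{\text{3-LieDer}}(L,L)$. For the converse, I would invoke the preceding proposition, which already establishes that $(Ob^3,Ob^2)$ is a $3$-cocycle, so its class is well defined; triviality of $[(Ob^3,Ob^2)]$ then means $(Ob^3,Ob^2)=\partial(\mu_{N+1},\phi_{N+1})$ for some $2$-cochain $(\mu_{N+1},\phi_{N+1})$, and taking these as the new coefficients produces, by the same equivalence run backwards, a genuine deformation of order $N+1$.

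The argument is essentially formal once the obstruction has been packaged as a cocycle, so I do not expect a genuine conceptual obstacle here; the real work, namely verifying that $(Ob^3,Ob^2)$ is a cocycle, has already been dispatched in the earlier proposition. The only point demanding care is the sign and index bookkeeping when matching the raw summation conditions \eqref{eqn 5.12}--\eqref{eqn 5.13} to their Maurer--Cartan form \eqref{eqn 5.14}--\eqref{eqn 5.15} and then to the coboundary formula, since a single misplaced sign would identify the wrong cochain as the solution and break the equivalence in one of the two directions.
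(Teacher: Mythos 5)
Your proposal is correct and follows essentially the same route as the paper: both arguments reduce extendability to the single cochain-level identity $\partial(\mu_{N+1},\phi_{N+1})=(Ob^3,Ob^2)$, obtained by matching the order-$(N+1)$ deformation equations to the coboundary formula, and then read off both implications directly (with the earlier proposition guaranteeing that the obstruction is a $3$-cocycle). Your unwinding of the sign $(-1)^n$ at $n=2$ and the identification with Eqs.~(\ref{eqn 5.14})--(\ref{eqn 5.15}) is exactly the ``above observation'' the paper appeals to, just spelled out more explicitly.
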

{\bf Proof.}  Suppose that a deformation $(\mu_t, \phi_t )$ of order $N$ of the  3-LieDer pair $(L, \phi_L)$ extends
to a deformation of order $N + 1$. Then  we have
\begin{eqnarray*}
\partial(\mu_{N+1}, \phi_{N+1})=(Ob^3, Ob^2 ).
\end{eqnarray*}
Thus, the obstruction class $[(Ob^3, Ob^2 )]\in  H^3_{\text{3-LieDer}} (L, L)$ is trivial.

Conversely, if the obstruction class $[(Ob^3, Ob^2)]\in  H^3_{\text{3-LieDer}} (L, L)$ is trivial, suppose that
\begin{eqnarray*}
(Ob^3, Ob^2 )=\partial(\mu_{N+1}, \phi_{N+1}),
\end{eqnarray*}
for some $(\mu_{N+1}, \phi_{N+1})\in C^{2}_{\text{3-LieDer}}(L, L)$. Then it follows from the above observation that $(\mu'_t=\mu_t+t^{N+1}\mu_{N+1}, \phi'_t=\phi_t+t^{N+1}\phi_{N+1})$  is a deformation of order $N + 1$, which implies that $(\mu_t, \phi_t )$ is extendable.  \hfill $\square$
\begin{theorem}
 If $H^3_{\text{3-LieDer}} (L, L)$,  then every finite order deformation of $(L, \phi_L)$ is extendable.
\end{theorem}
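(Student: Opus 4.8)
The plan is to deduce this immediately as a corollary of the preceding extendability theorem, since the hypothesis (which I read as $H^3_{\text{3-LieDer}}(L, L) = 0$) forces every obstruction class to vanish automatically. First I would fix an arbitrary finite order deformation $(\mu_t, \phi_t)$ of order $N$ of the 3-LieDer pair $(L, \phi_L)$. By the construction preceding the obstruction proposition, this deformation determines a pair $(Ob^3, Ob^2) \in C^3_{\text{3-LieDer}}(L, L)$, assembled from the quadratic sums $-\tfrac{1}{2}\sum_{i+j=N+1,\, i,j>0}[\mu_i, \mu_j]$ and $-\sum_{i+j=N+1,\, i,j>0}[\phi_i, \mu_j]$ appearing in Equations \eqref{eqn 5.14}--\eqref{eqn 5.15}.

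Next I would invoke the proposition asserting that $(Ob^3, Ob^2)$ is a 3-cocycle, so that its cohomology class $[(Ob^3, Ob^2)]$ is a well-defined element of $H^3_{\text{3-LieDer}}(L, L)$. The crucial step is then purely formal: under the hypothesis that this cohomology group is the zero group, the class $[(Ob^3, Ob^2)]$ has no choice but to be the trivial class, since the zero group contains only the zero element. Equivalently, there exists $(\mu_{N+1}, \phi_{N+1}) \in C^2_{\text{3-LieDer}}(L, L)$ with $\partial(\mu_{N+1}, \phi_{N+1}) = (Ob^3, Ob^2)$.

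Finally I would apply the previous theorem, which states that a deformation of order $N$ is extendable if and only if its obstruction class $[(Ob^3, Ob^2)] \in H^3_{\text{3-LieDer}}(L, L)$ is trivial. Since the obstruction class vanishes for the reason above, the theorem yields that $(\mu_t, \phi_t)$ extends to a deformation of order $N+1$. As the deformation was arbitrary, every finite order deformation of $(L, \phi_L)$ is extendable.

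Honestly, there is no real obstacle in this statement: it is a direct corollary of the preceding extendability theorem, and the only substantive content was already packaged into the proposition identifying $(Ob^3, Ob^2)$ as a cocycle and into the equivalence ``extendable $\iff$ obstruction trivial.'' The only thing worth flagging in the write-up is the evident typo in the hypothesis, which should read $H^3_{\text{3-LieDer}}(L, L) = 0$; with that correction the argument is complete in a single application of the earlier theorem.

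\hfill $\square$
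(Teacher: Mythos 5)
Your proposal is correct and takes exactly the route the paper intends: the paper states this theorem without proof, treating it as an immediate corollary of the preceding theorem (extendable $\iff$ obstruction class trivial) together with the proposition that $(Ob^3, Ob^2)$ is a 3-cocycle, which is precisely your argument. You are also right that the hypothesis contains a typo and should read $H^3_{\text{3-LieDer}}(L, L) = 0$.
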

\begin{cor}
If $H^3_{\text{3-LieDer}} (L, L)=0$,  then every $2$-cocycle in the cohomology of the 3-LieDer pair $(L, \phi_L)$ with coefficients in itself is the infinitesimal of a formal deformation of $(L, \phi_L)$.
\end{cor}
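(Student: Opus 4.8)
The plan is to build the required formal deformation one order at a time, feeding the extendability theorem proved just above into an induction on the order; the hypothesis $H^3_{\text{3-LieDer}}(L,L)=0$ will guarantee that the obstruction to each successive extension vanishes, so the induction never terminates.

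First I would record that the prescribed $2$-cocycle already yields a deformation of order $1$. Given $(\mu_1,\phi_1)\in C^2_{\text{3-LieDer}}(L,L)$ with $\partial(\mu_1,\phi_1)=0$, set $\mu^{(1)}_t=\mu+t\mu_1$ and $\phi^{(1)}_t=\phi_L+t\phi_1$. The conditions that $(\mu^{(1)}_t,\phi^{(1)}_t)$ define a deformation of order $1$ are exactly Eq.(\ref{5.5}) and Eq.(\ref{5.6}), which were shown above to be equivalent to $d(\mu_1)=0$ and $d(\phi_1)+\delta(\mu_1)=0$, that is, to $\partial(\mu_1,\phi_1)=0$. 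Hence $(\mu^{(1)}_t,\phi^{(1)}_t)$ is a genuine order-$1$ deformation whose infinitesimal is the given $(\mu_1,\phi_1)$.

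Next I would run the induction. Assume that for some $N\ge 1$ we have a deformation $(\mu^{(N)}_t,\phi^{(N)}_t)$ of order $N$ whose linear term is $(\mu_1,\phi_1)$. By the preceding theorem, since $H^3_{\text{3-LieDer}}(L,L)=0$ the obstruction class $[(Ob^3,Ob^2)]\in H^3_{\text{3-LieDer}}(L,L)$ is automatically trivial, and therefore this order-$N$ deformation is extendable: there exists $(\mu_{N+1},\phi_{N+1})\in C^2_{\text{3-LieDer}}(L,L)$ such that $\mu^{(N+1)}_t=\mu^{(N)}_t+t^{N+1}\mu_{N+1}$ and $\phi^{(N+1)}_t=\phi^{(N)}_t+t^{N+1}\phi_{N+1}$ form a deformation of order $N+1$. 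Since only a term of order $t^{N+1}$ is added, the linear term is unchanged and remains $(\mu_1,\phi_1)$.

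Finally I would assemble the limit. The inductively chosen coefficients organise into formal power series $\mu_t=\sum_{i\ge 0}t^i\mu_i$ and $\phi_t=\sum_{i\ge 0}t^i\phi_i$. By construction every truncation satisfies the deformation equations Eq.(\ref{5.1})--Eq.(\ref{5.2}) modulo $t^{N+1}$, and since the defining identities are required order by order, $(\mu_t,\phi_t)$ is a bona fide formal one-parameter deformation of $(L,\phi_L)$ with infinitesimal $(\mu_1,\phi_1)$. The point worth stressing is that there is no genuine computational obstacle here: all the real work sits inside the extendability theorem, and the only thing the hypothesis $H^3_{\text{3-LieDer}}(L,L)=0$ buys us is the vanishing of every successive obstruction class. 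The sole bookkeeping subtlety is to verify that each extension leaves the previously fixed lower-order terms intact and that the resulting infinite series legitimately defines a formal deformation, both of which are immediate from the additive form $\mu^{(N+1)}_t=\mu^{(N)}_t+t^{N+1}\mu_{N+1}$ of the extension.
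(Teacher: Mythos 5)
Your proposal is correct and is exactly the argument the paper intends: the corollary is stated without proof as an immediate consequence of the extendability theorem, and your induction (realize the $2$-cocycle as an order-$1$ deformation via Eqs.~(\ref{5.5})--(\ref{5.6}), then extend order by order since every obstruction class lies in $H^3_{\text{3-LieDer}}(L,L)=0$) is the standard way to spell this out. No gaps; the bookkeeping you flag (lower-order terms unchanged, order-by-order verification of the deformation equations) is handled correctly.
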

\begin{center}
 {\bf ACKNOWLEDGEMENT}
 \end{center}

The paper is supported by the NSF of China (No. 12161013) and Guizhou Provincial  Science and Technology  Foundation (No. [2020]1Y005).

\renewcommand{\refname}{REFERENCES}

\end{document}